\documentclass{elsart}               

\usepackage{graphicx}

\usepackage{amssymb}      
\usepackage{bm}           
\usepackage{amsfonts}     

\usepackage{algorithm}
\usepackage{algpseudocode}
\usepackage{booktabs}
\usepackage{color}

\def\adj{\mathop{\rm adj}\nolimits}
\def\pdet{\mathop{\rm pdet}\nolimits}
\def\ind{\mathop{\rm ind}\nolimits}
\def\diag{\mathop{\rm diag}\nolimits}
\def\rank{\mathop{\rm rank}\nolimits}

\begin{document}

	\begin{frontmatter}
		
		\title{Determinant Dynamics under Low-Rank Perturbations:
			A Unified Framework for Singular Systems\thanksref{footnoteinfo}} 
		
		\thanks[footnoteinfo]{Corresponding author R.~Vrabel.}
		
		\author[RV]{Robert Vrabel}\ead{robert.vrabel@stuba.sk}    
		
		\address[RV]{Slovak University of Technology in Bratislava, Institute of Applied Informatics, Automation and Mechatronics,  Bottova 25,  917 24 Trnava, Slovakia}  

		\begin{keyword}
			Matrix determinant lemma; low-rank perturbations; singular matrices;
			pseudodeterminant; Drazin inverse; log-determinant;
			controllability Gramian; reachability; linear systems.
		\end{keyword}
		
		\begin{abstract}
			This paper develops a unified analytical framework for determinant
			identities under finite-rank perturbations of square matrices that
			remains valid without invertibility assumptions. In contrast to
			classical inverse-based formulations, the approach is based on an
			adjugate-driven additive representation, which extends naturally to
			singular matrices and yields explicit, non-asymptotic formulas.
			Building on this representation, we derive recursive and multiplicative
			expressions describing the evolution of determinant and log-determinant
			quantities under successive rank-one updates. These results reveal a
			structural interpretation in which determinant-based quantities evolve
			as cumulative measures of independent directions, providing a precise
			decomposition of incremental contributions.
			To address the singular case, we develop a systematic extension based on
			the Drazin inverse and the pseudodeterminant, leading to closed-form 
			identities that isolate the contribution of the nonzero spectrum. In
			particular, we obtain a generalized determinant formula that can be
			viewed as a singular counterpart of the matrix determinant lemma.
			The spectral impact of low-rank perturbations is analyzed, yielding
			explicit conditions governing eigenvalue shifts and stability
			preservation.
			
			The proposed framework establishes a direct analytical link between
			matrix perturbation theory and system-theoretic concepts. In particular,
			we show that the pseudodeterminant of controllability Gramians admits a
			multiplicative decomposition that explicitly quantifies the incremental
			expansion of the reachable subspace under successive inputs. This leads
			to a unified interpretation of information accumulation, uncertainty
			reduction, and reachability in both full-rank and rank-deficient linear
			systems.
		\end{abstract} 
		
	\end{frontmatter}

	\section{Introduction}
	
	Low-rank perturbations of matrices arise naturally in control theory,
	estimation, and optimization. They appear, for example, in recursive
	covariance updates, Kalman filtering, experiment design, and in the
	analysis of controllability and observability Gramians. In many of
	these applications, the effect of successive rank-one updates on
	determinants and eigenvalues plays a central role.
	
	Classical determinant identities, such as the matrix determinant lemma,
	are typically formulated for rank-one updates under invertibility
	assumptions \cite{HornJohnson1985,Harville1997}. While these formulas
	are well understood in the nonsingular case, their direct extension to
	singular matrices or to finite-rank perturbations is not straightforward.
	In particular, multiplicative representations based on matrix inverses
	break down when singularity is present.
	
	Low-rank matrix structures also play an increasingly important role in
	modern control and estimation. In recursive estimation and Kalman
	filtering, covariance and information matrices are updated sequentially
	as new measurements are incorporated \cite{Bierman1974,Verhaegen1986}.
	Such factored and square-root formulations improve numerical stability
	and computational efficiency. More recently, low-rank representations
	have gained renewed attention in high-dimensional and data-driven
	settings, where they enable scalable filtering and inference
	\cite{Schmidt2023,Li2024}. These approaches exploit the fact that
	covariance updates often evolve in low-dimensional subspaces.
	
	In parallel, log-determinant functionals have been widely used as
	measures of information, uncertainty, and rank surrogates in optimization
	and learning \cite{Boyd2004,Kang2015}. In control, related structures
	appear in reachability analysis through controllability Gramians,
	which describe the accumulation of reachable directions and the
	geometry of the reachable set \cite{Kailath1980,Gough2015}.
	Distributed and networked estimation further highlights the importance
	of structured covariance updates \cite{OlfatiSaber2007}.
	
	Despite these developments, existing works primarily focus on numerical,
	algorithmic, or application-specific aspects of low-rank updates.
	In contrast, the present paper develops explicit analytical identities
	that reveal the underlying structure of determinant and log-determinant
	evolution under finite-rank perturbations.
	
	The aim of this paper is to develop determinant identities for
	finite-rank perturbations of arbitrary square matrices, without
	requiring invertibility. The approach is based on an additive
	representation involving the adjugate matrix, which remains valid
	in the singular case and leads naturally to recursive formulas
	describing determinant evolution.
	
	A key feature of the proposed framework is that it provides a
	unified interpretation of determinant and log-determinant quantities
	as cumulative measures associated with individual update directions.
	This viewpoint is particularly relevant in control and estimation,
	where log-determinant expressions quantify uncertainty, information
	content, and the volume of reachable or observable sets.
	
	In addition, the singular case is addressed using the Drazin inverse
	introduced in \cite{Drazin01081958} together with the pseudodeterminant,
	which provide a consistent extension of determinant identities beyond
	the nonsingular setting. The resulting formulas capture both spectral
	changes and structural properties of the underlying system. Among
	generalized inverses, the Moore--Penrose pseudoinverse is perhaps the
	most widely used; both the Moore--Penrose and the Drazin inverse can be
	understood in terms of canonical matrix decompositions, the former
	based on the singular value decomposition and the latter on the
	Jordan canonical form.
	
	The developed identities are applied to several problems in control
	and estimation. In particular, we analyze covariance updates in
	recursive estimation, derive explicit expressions for log-determinant
	evolution in Kalman-type filtering, and study determinant-based
	characterizations of controllability Gramians and reachable sets.
	These applications illustrate that determinant-based representations
	provide a common framework for analyzing information accumulation,
	uncertainty reduction, and reachability in linear systems. Moreover,
	the proposed framework naturally extends to singular settings, where
	the pseudodeterminant provides a consistent measure of the volume of
	the reachable subspace, allowing a unified treatment of both full-rank
	and rank-deficient systems.

	\noindent
	\textit{Main contributions.}
	The main contributions of this paper are as follows:
	\begin{enumerate}
		\item We introduce an adjugate-based framework for finite-rank perturbations
		that removes invertibility assumptions and provides a unified algebraic
		representation valid for both nonsingular and singular matrices.
		
		\item We uncover a \emph{determinant dynamics} under successive rank-one updates,
		showing that determinant and log-determinant quantities admit explicit
		additive and multiplicative decompositions. This reveals a previously
		unexploited structural interpretation in which each update contributes
		an incremental term associated with a specific direction.
		
		\item We extend this structure to singular systems using the Drazin inverse
		and the pseudodeterminant, obtaining multiplicative formulas that remain
		well-defined in the presence of rank deficiency and isolate the contribution
		of the nonzero spectrum.
		
		\item We establish that these representations induce a \emph{multiplicative
			geometry} of low-rank updates: determinant-based quantities evolve as
		cumulative measures of independent directions, allowing explicit
		characterization of growth, decay, and conditioning effects.
		
		\item We demonstrate that this framework provides a direct analytical link
		between matrix perturbation theory and system-theoretic concepts. In particular,
		we show that the pseudodeterminant of controllability Gramians admits a
		multiplicative decomposition that quantifies the expansion of the reachable
		subspace, thereby yielding a unified interpretation of information accumulation,
		uncertainty reduction, and reachability in linear systems.
	\end{enumerate}
	
	The paper is organized according to a progression from general
	algebraic identities to nonsingular and singular formulations,
	and finally to system-theoretic applications.
	The results developed in the paper should be viewed as complementary
	layers of the same determinant-evolution mechanism under low-rank
	updates.
	Theorem~\ref{thm:rank1} provides the basic adjugate-based identity,
	which is valid without invertibility assumptions and underlies the
	recursive determinant dynamics of Theorem~\ref{thm:main}. When the
	intermediate matrices are nonsingular, this mechanism admits the
	multiplicative inverse-based formulation of Theorem~\ref{thm:logdet}
	and Corollary~\ref{cor:log}.
	For singular matrices with semisimple zero eigenvalue,
	Definition~\ref{def:singular_structure}, Lemma~\ref{lem:block_drazin},
	and Lemma~\ref{lem:projected_columns} provide the structural tools
	needed to pass to the Drazin inverse and the pseudodeterminant. This
	leads to the singular multiplicative identity in
	Theorem~\ref{thm:pdet_drazin}. Theorem~\ref{thm:regularized_det}
	provides a complementary regularized version of the same structure,
	which is particularly useful when singular matrices arise as limits of
	nonsingular ones.
	These layers reappear in the applications: the nonsingular formulas are
	used in covariance and Kalman-type updates, whereas the regularized
	pseudodeterminant viewpoint is essential in the controllability-Gramian
	analysis of Theorem~\ref{thm:gramian_growth_pdet}. Thus, the adjugate,
	inverse, Drazin, and regularized formulations are different
	manifestations of a single structural principle adapted to general,
	nonsingular, and singular settings.

	\section{Notations and preliminaries}\label{sec:Notations-and-Preliminaries}
	
	Let $H \in {\mathbb R}^{n \times n}$ be a square matrix. For matrices
	$U, V \in {\mathbb R}^{n \times r}$ we write
	\(
	UV^T = \sum_{i=1}^r u_i v_i^T,
	\)
	where $u_i, v_i \in {\mathbb R}^n$ denote the columns of $U$ and $V$.
	Each term $u_i v_i^T$ represents a rank-one update acting along the
	direction $u_i$ with sensitivity defined by $v_i$.
	
	Matrices of the form
	\(
	H + UV^T
	\)
	will be referred to as finite-rank perturbations of $H$. It is often
	convenient to interpret such perturbations as sequences of rank-one
	updates. Given vectors $u_i, v_i \in {\mathbb R}^n$, $i=1,\dots,r$, we define
	\(
	\Delta_0 = 0,
	\
	\Delta_k = \sum_{j=1}^k u_j v_j^T,
	\quad k=1,\dots,r.
	\)
	Then $H+\Delta_k$ represents the result of $k$ successive rank-one
	updates, and in particular $\Delta_r = UV^T$.
	
	The adjugate of a matrix $A$ is denoted by $\adj(A)$ and satisfies
	\(
	A \adj(A) = \adj(A) A = \det(A) I.
	\)
	In the singular case, we make use of the pseudodeterminant and the
	Drazin inverse. The pseudodeterminant of $H$, denoted by $\pdet(H)$,
	is defined as the product of all nonzero eigenvalues of $H$, counted
	with multiplicity. This quantity extends the determinant to singular
	matrices by capturing the contribution of the nonzero spectrum.
	
	The Drazin inverse $H^D$ is defined for arbitrary square matrices and
	is characterized by
	\[
	H H^D = H^D H, \ \
	H^D H H^D = H^D, \ \
	H^{k+1} H^D = H^k,
	\]
	where $k = \ind(H)$ is the index of $H$, i.e., the size of the largest
	Jordan block associated with the eigenvalue zero.
	
	The matrix
	\(
	P_0 = I - H H^D
	\)
	is the spectral projector onto the generalized nullspace of $H$. It acts
	as the identity on the generalized nullspace and annihilates all
	components corresponding to nonzero eigenvalues.
	
	The following structural description will be used throughout the paper.
	
	\begin{defn}\label{def:singular_structure}
		Let $H \in {\mathbb R}^{n \times n}$. The matrix $H$ is said to have a
		semisimple zero eigenvalue if
		\(
		\ind(H)=1.
		\)
		In this case, there exists a nonsingular matrix $S$ such that
		\[
		S^{-1}HS=
		\left[
		\begin{array}{cc}
			J & 0\\
			0 & 0
		\end{array}
		\right],
		\]
		where $J \in {\mathbb R}^{q \times q}$ is nonsingular and $q+\nu=n$.
		Moreover,
		\[
		\pdet(H)=\det(J),
		\]
		and the Drazin inverse and the spectral projector are given by
		\[
		H^D =
		S
		\left[
		\begin{array}{cc}
			J^{-1} & 0\\
			0 & 0
		\end{array}
		\right]
		S^{-1},
		\
		P_0 =
		S
		\left[
		\begin{array}{cc}
			0 & 0\\
			0 & I_\nu
		\end{array}
		\right]
		S^{-1}.
		\]
	\end{defn}
	
	The following identity will play a central role in the subsequent
	analysis.
	
	\begin{thm}\label{thm:rank1}
		Let $H \in {\mathbb R}^{n \times n}$ and let
		$u, v \in {\mathbb R}^n$. Then
		\[
		\det(H + uv^T)
		=
		\det(H) + v^T \adj(H) u.
		\]
	\end{thm}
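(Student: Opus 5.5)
The plan is to prove the identity directly via multilinearity of the determinant in the columns, which avoids any invertibility assumption. Write $H=[h_1,\dots,h_n]$ in columns, so that the $j$-th column of $H+uv^T$ is $h_j+v_j u$, where $v_j$ is the $j$-th entry of $v$.

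Expanding $\det(H+uv^T)$ by multilinearity produces a sum over subsets of columns in which $h_j$ is replaced by $v_j u$. Any term with two or more replaced columns contains two columns proportional to $u$, so it vanishes. The surviving terms are $\det(H)$ and, for each $j$, the determinant of $H$ with column $j$ replaced by $v_j u$. This gives
\[
\det(H+uv^T)=\det(H)+\sum_{j=1}^n v_j \det(H_j(u)),
\]
where $H_j(u)$ is $H$ with its $j$-th column replaced by $u$.

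Next I expand $\det(H_j(u))$ along its $j$-th column:
\[
\det(H_j(u))=\sum_{i} u_i C_{ij},
\]
with $C_{ij}$ the cofactors of $H$. Since $\adj(H)_{ji}=C_{ij}$, this equals $(\adj(H)u)_j$. Summing against $v_j$ then yields $v^T\adj(H)u$, which completes the proof.

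As a cross-check I would also note the alternative density argument. For invertible $H$, the classical lemma gives
\[
\det(H)(1+v^TH^{-1}u)=\det(H)+v^T\adj(H)u,
\]
using $\adj(H)=\det(H)H^{-1}$. Both sides are polynomials in the entries of $H$, and invertible matrices are dense, so the identity extends to all $H$. The only point needing care is the index bookkeeping between cofactors and the transpose in the adjugate; the rest is routine.
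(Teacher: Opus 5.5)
Your proof is correct. The paper itself gives no proof of Theorem~\ref{thm:rank1}: it states the identity, points to \cite{Vrabel2016}, and then uses it throughout. So there is no in-paper argument to compare against, and your proof works as a self-contained replacement for the citation.

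Your main route has three steps:
\begin{enumerate}
\item Expand $\det(H+uv^T)$ column by column using multilinearity.
\item Discard every term that contains two columns proportional to $u$.
\item Identify $\det H_j(u)=(\adj(H)u)_j$ by cofactor expansion, which is the Cramer's-rule computation.
\end{enumerate}
This argument is purely algebraic. It never uses invertibility, continuity, or the fact that the scalars are real, so it holds verbatim over any commutative ring.

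That generality matters for how the paper later uses the theorem. The statement is formulated only for real $H$, yet it is applied to $H=\lambda I-A-\Delta_{i-1}$ with complex $\lambda$ (Theorems~\ref{thm:eigenvalue_condition} and~\ref{thm:stability_shift}). Theorem~\ref{thm:charpoly_decomposition} effectively needs it as an identity of polynomials in $\lambda$. Your multilinearity proof covers all of these uses directly.

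Your density cross-check is also valid. It does, however, lean on the classical invertible-case lemma plus a continuity argument. To reach complex or formal $\lambda$, it needs one more remark: either rerun it over $\mathbb{C}$, or note that a polynomial identity holding on all of $\mathbb{R}^{n^2+2n}$ is an identity of polynomials.

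One step in your main argument is worth stating explicitly. The cofactors of $H_j(u)$ along column $j$ coincide with those of $H$, because deleting column $j$ removes the only column that was changed.
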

	
	Theorem \ref{thm:rank1} can be viewed as a form of the matrix
	determinant lemma that remains valid without any invertibility
	assumption; see, e.g., \cite{Vrabel2016}.

	\section{Finite-rank perturbations and determinant dynamics}\label{sec:Finite-rank perturbations and determinant dynamics}
	
	\begin{thm}\label{thm:main}
		Let $H \in {\mathbb R}^{n \times n}$ and let
		$u_i, v_i \in {\mathbb R}^n$, $i=1,\dots,r$.
		Then for every $k=1,\dots,r$ we have
		\begin{equation}
			\det(H + \Delta_k)
			=
			\det(H)
			+
			\sum_{i=1}^k
			v_i^T \adj(H + \Delta_{i-1}) u_i.
			\label{eq:thm_main}
		\end{equation}
		Moreover, the sequence
		\(
		D_k = \det(H + \Delta_k), \ k=0,1,\dots,r,
		\)
		satisfies
		\[
		D_k = D_{k-1} + v_k^T \adj(H + \Delta_{k-1}) u_k,
		\ k=1,\dots,r.
		\]
	\end{thm}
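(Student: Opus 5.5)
The plan is to prove the one-step recursion first and then obtain \eqref{eq:thm_main} by telescoping. The key observation is that $\Delta_k = \Delta_{k-1} + u_k v_k^T$ for every $k=1,\dots,r$, with $\Delta_0 = 0$. This follows directly from the definition of the partial sums. Consequently
\[
H + \Delta_k = (H + \Delta_{k-1}) + u_k v_k^T,
\]
so each matrix in the sequence is a rank-one update of its predecessor.

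\textbf{Step 1: the recursion.} Fix $k$ and apply Theorem~\ref{thm:rank1} with the matrix $H$ replaced by $H+\Delta_{k-1}$ and with $u=u_k$, $v=v_k$. Theorem~\ref{thm:rank1} imposes no invertibility assumption, so this is legitimate even when some intermediate matrix $H+\Delta_{k-1}$ is singular. It gives
\[
D_k = \det(H+\Delta_{k-1}+u_kv_k^T) = D_{k-1} + v_k^T \adj(H+\Delta_{k-1})\,u_k ,
\]
which is the second assertion.

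\textbf{Step 2: telescoping.} Sum the recursion for $i=1,\dots,k$. The left side telescopes to $D_k - D_0$, and $D_0 = \det(H)$ because $\Delta_0=0$. This yields \eqref{eq:thm_main}. Equivalently, one can run a formal induction on $k$: the base case $k=1$ is Theorem~\ref{thm:rank1} applied to $H$ itself, and the inductive step is Step~1.

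There is no genuine obstacle here. The only point requiring care is the one emphasized in Step~1: no inverse of $H+\Delta_{i-1}$ is used anywhere, only the adjugate, and the adjugate is always defined. This is what makes the identity hold for arbitrary, possibly singular, intermediate matrices.
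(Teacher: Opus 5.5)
Your proposal is correct and matches the paper's proof: both apply Theorem~\ref{thm:rank1} to $H+\Delta_{k-1}$, using $\Delta_k=\Delta_{k-1}+u_kv_k^T$, and then accumulate the increments. The paper phrases this accumulation as an induction on $k$ rather than as your telescoping sum; this is only a difference in presentation.
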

	
	\begin{pf}
		We proceed by induction on $k$.
		
		For $k=1$, we have
		\(
		\Delta_1 = u_1 v_1^T,
		\)
		and Theorem \ref{thm:rank1} gives
		\(
		\det(H + \Delta_1)
		=
		\det(H) + v_1^T \adj(H) u_1.
		\)
		Thus the result holds for $k=1$.
		
		Assume that for some $k-1 \ge 1$ we have
		\[
		\det(H + \Delta_{k-1})
		=
		\det(H)
		+
		\sum_{i=1}^{k-1}
		v_i^T \adj(H + \Delta_{i-1}) u_i.
		\]
		Since
		\(
		\Delta_k = \Delta_{k-1} + u_k v_k^T,
		\)
		we can write
		\(
		H + \Delta_k = (H + \Delta_{k-1}) + u_k v_k^T.
		\)
		Applying Theorem \ref{thm:rank1} to the matrix $H + \Delta_{k-1}$,
		we obtain
		\[
		\det(H + \Delta_k)
		=
		\det(H + \Delta_{k-1})
		+
		v_k^T \adj(H + \Delta_{k-1}) u_k.
		\]
		Substituting the induction hypothesis yields (\ref{eq:thm_main}).
		This proves the first statement.
		The second statement follows immediately from the definition
		$D_k = \det(H + \Delta_k)$ and the identity above.
	\end{pf}
	
	Theorem \ref{thm:main} shows that the determinant under
	finite-rank perturbations admits both a global representation
	and a recursive update formula. The latter can be interpreted
	as a discrete evolution law for the determinant.
	
	\section{Log-determinant representation}\label{sec:Log-determinant representation}
	
	The logarithm of the determinant plays a fundamental role in
	control, estimation, and information theory. In particular,
	for a positive definite matrix $H$, the quantity
	\(
	\log \det(H)
	\)
	admits several important interpretations and possesses rich
	analytical structure.
	Geometrically, $\det(H)$ is proportional to the volume of the
	ellipsoid
	\(
	\{x \in {\mathbb R}^n : x^T H^{-1} x \le 1\},
	\)
	and therefore $\log \det(H)$ measures this volume on a logarithmic
	scale. This logarithmic transformation converts multiplicative
	volume changes into additive quantities, which is particularly
	useful for analyzing incremental updates.
	
	From an information-theoretic perspective, if $H$ represents a
	covariance matrix of a Gaussian random vector, then
	\(
	\frac{1}{2} \log \det(H)
	\)
	is proportional to the differential entropy; see, e.g., \cite{cover2006elements}.
	In this context, $\log \det(H)$ quantifies the uncertainty or information
	content of the system.
	
	Beyond these classical interpretations, the log-de\-ter\-mi\-nant also
	plays a central role in modern optimization and data science.
	In particular, it is a strictly concave function on the cone of
	positive definite matrices and serves as a self-concordant barrier
	for semidefinite programming. Moreover, it appears in likelihood-based
	inference for high-dimensional Gaussian models and as a smooth surrogate
	for rank in low-rank approximation problems.
	In estimation and control, log-determinant criteria arise naturally
	in optimal experiment design, sensor placement, and Kalman filtering.
	Maximizing $\log \det(H)$ corresponds to maximizing the information
	gained from measurements, while minimizing it corresponds to reducing
	uncertainty.
	
	These interpretations share a common structural feature: the
	log-determinant transforms multiplicative effects into additive
	contributions, making it particularly suitable for analyzing
	sequential updates.
	For these reasons, it is of interest to obtain explicit representations
	of $\log \det(H + UV^T)$ under structured perturbations. In particular,
	such representations allow one to decompose the evolution of
	$\log \det(H)$ into incremental contributions associated with
	individual update directions, providing a transparent analytical
	description of information accumulation and structural change.
	
	\begin{thm}\label{thm:logdet}
		Assume that the matrices
		\(
		H + \Delta_k, \ k=0,1,\dots,r-1,
		\)
		are nonsingular  (in particular, $H$ is nonsingular). Then
		\[
		\det(H + \Delta_r)
		=
		\det(H)
		\prod_{i=1}^r
		\left(
		1 + v_i^T (H + \Delta_{i-1})^{-1} u_i
		\right).
		\]
	\end{thm}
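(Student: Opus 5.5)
The plan is to derive the product formula from the recursion of Theorem~\ref{thm:main}, using the adjugate identity at each step to pass from adjugates to inverses. Write $D_k=\det(H+\Delta_k)$. Theorem~\ref{thm:main} gives, for $k=1,\dots,r$,
\[
D_k = D_{k-1} + v_k^T \adj(H+\Delta_{k-1})\, u_k .
\]
The hypothesis says $H+\Delta_{k-1}$ is nonsingular for every $k=1,\dots,r$. For each such matrix, the relation $A\adj(A)=\det(A)I$ recalled in Section~\ref{sec:Notations-and-Preliminaries} gives
\[
\adj(H+\Delta_{k-1}) = D_{k-1}\,(H+\Delta_{k-1})^{-1}.
\]
Substituting this into the recursion and factoring out $D_{k-1}$ yields the one-step multiplicative law
\[
D_k = D_{k-1}\bigl(1+v_k^T (H+\Delta_{k-1})^{-1} u_k\bigr), \qquad k=1,\dots,r.
\]

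Next we telescope this law by induction on $k$, starting from $D_0=\det(H)$ because $\Delta_0=0$. This gives
\[
D_k=\det(H)\prod_{i=1}^k\bigl(1+v_i^T(H+\Delta_{i-1})^{-1}u_i\bigr)
\]
for each $k$, and taking $k=r$ is the claim. Equivalently, one can apply Theorem~\ref{thm:rank1} directly to $(H+\Delta_{k-1})+u_kv_k^T$ at each step, which avoids quoting the full sum in Theorem~\ref{thm:main}.

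The only point needing care is the range of the hypotheses. Invertibility is used only for $H+\Delta_0,\dots,H+\Delta_{r-1}$, and only these inverses appear in the product. The final matrix $H+\Delta_r$ is allowed to be singular. In that case the corresponding factor $1+v_r^T(H+\Delta_{r-1})^{-1}u_r$ simply vanishes, and the formula remains consistent. There is no real obstacle beyond checking that no inverse of $H+\Delta_r$ is ever invoked.
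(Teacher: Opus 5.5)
Your proposal is correct and essentially matches the paper's proof, which applies Theorem~\ref{thm:rank1} to $H+\Delta_{i-1}$ at each step, replaces $\adj(H+\Delta_{i-1})$ by $\det(H+\Delta_{i-1})(H+\Delta_{i-1})^{-1}$, factors out the determinant, and iterates from $\det(H)$. Your remark that $H+\Delta_r$ may be singular, with the last factor then vanishing, is also accurate.
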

	
	\begin{pf}
		For each $i=1,\dots,r$, put
		\(
		M_{i-1}=H+\Delta_{i-1}.
		\)
		Then, by definition of $\Delta_i$, we have
		\(
		\Delta_i=\Delta_{i-1}+u_i v_i^T,
		\)
		and therefore
		\[
		H+\Delta_i
		=
		(H+\Delta_{i-1})+u_i v_i^T
		=
		M_{i-1}+u_i v_i^T.
		\]
		Since $M_{i-1}=H+\Delta_{i-1}$ is nonsingular by assumption, we may apply
		Theorem \ref{thm:rank1} to the matrix $M_{i-1}$. This gives
		\[
		\det(M_{i-1}+u_i v_i^T)
		=
		\det(M_{i-1})+v_i^T \adj(M_{i-1})u_i.
		\]
		Because $M_{i-1}$ is nonsingular, we also have the standard identity
		\(
		\adj(M_{i-1})=\det(M_{i-1})\,M_{i-1}^{-1}.
		\)
		Hence
		\[
		v_i^T \adj(M_{i-1})u_i
		=
		v_i^T \left(\det(M_{i-1})\,M_{i-1}^{-1}\right)u_i
		=	\det(M_{i-1})\,v_i^T M_{i-1}^{-1}u_i.
		\]
		Substituting this into the previous equality, we obtain
		\[
		\det(M_{i-1}+u_i v_i^T)
		=
		\det(M_{i-1})
		+
		\det(M_{i-1})\,v_i^T M_{i-1}^{-1}u_i.
		\]
		Factoring out $\det(M_{i-1})$, it follows that
		\[
		\det(M_{i-1}+u_i v_i^T)
		=
		\det(M_{i-1})
		\left(
		1+v_i^T M_{i-1}^{-1}u_i
		\right).
		\]
		Recalling that
		\(
		M_{i-1}+u_i v_i^T=H+\Delta_i
		\)
		and
		\(
		M_{i-1}=H+\Delta_{i-1},
		\)
		we arrive at
		\[
		\det(H+\Delta_i)
		=
		\det(H+\Delta_{i-1})
		\left(
		1+v_i^T (H+\Delta_{i-1})^{-1}u_i
		\right)
		\]
		for every $i=1,\dots,r$.
		Now we apply this identity successively for $i=1,2,\dots,r$.
		For $i=1$,
		\[
		\det(H+\Delta_1)
		=
		\det(H)
		\left(
		1+v_1^T (H+\Delta_0)^{-1}u_1
		\right).
		\]
		Since $\Delta_0=0$, this becomes
		\(
		\det(H+\Delta_1)
		=
		\det(H)
		\left(
		1+v_1^T H^{-1}u_1
		\right).
		\)
		
		For $i=2$,
		\(
		\det(H+\Delta_2)
		=
		\det(H+\Delta_1)
		\left(
		1+v_2^T (H+\Delta_1)^{-1}u_2
		\right).
		\)
		Substituting the expression for $\det(H+\Delta_1)$, we get
		\[
		\det(H+\Delta_2)
		=
		\det(H)
		\left(
		1+v_1^T (H+\Delta_0)^{-1}u_1
		\right)
		\left(
		1+v_2^T (H+\Delta_1)^{-1}u_2
		\right).
		\]
		Continuing in the same way, after $r$ steps we obtain
		\[
		\det(H+\Delta_r)
		=
		\det(H)
		\prod_{i=1}^r
		\left(
		1+v_i^T (H+\Delta_{i-1})^{-1}u_i
		\right).
		\]
		This completes the proof.
	\end{pf}
	\begin{cor}
		Assume that the matrices $H + \Delta_k$, $k=0,1,\dots,r$, are
		nonsingular and that
		\(
		\det(H + \Delta_k) > 0, \ k=0,1,\dots,r.
		\)
		Then
		\[
		\log \det(H + \Delta_r)
		=
		\log \det(H)
		+
		\sum_{i=1}^r
		\log
		\left(
		1 + v_i^T (H + \Delta_{i-1})^{-1} u_i
		\right).
		\]
		\label{cor:log}
	\end{cor}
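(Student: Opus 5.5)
The corollary follows by taking logarithms in the multiplicative identity of Theorem~\ref{thm:logdet}. The only real work is to check that every factor is a positive real number, so that the logarithm of the product splits into a sum of logarithms.

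The hypotheses of the corollary include nonsingularity of $H+\Delta_k$ for $k=0,\dots,r-1$, so Theorem~\ref{thm:logdet} applies. I would use the one-step identity from its proof rather than the final product:
\[
\det(H+\Delta_i)=\det(H+\Delta_{i-1})\left(1+v_i^T(H+\Delta_{i-1})^{-1}u_i\right),\quad i=1,\dots,r.
\]
Set $c_i=1+v_i^T(H+\Delta_{i-1})^{-1}u_i$. Since $\det(H+\Delta_{i-1})>0$, we can solve for it:
\[
c_i=\frac{\det(H+\Delta_i)}{\det(H+\Delta_{i-1})}.
\]
Both numerator and denominator are positive by hypothesis, so $c_i>0$ and $\log c_i$ is well defined.

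This positivity check is the only step needing care. Positivity of the product $\prod_i c_i$ alone would not rule out an even number of negative factors, and then the individual logarithms would be undefined. That is why I argue factor by factor, using the positivity of each intermediate determinant, and do not rely on positivity of $\det(H+\Delta_r)$ alone. Note also that the hypothesis at $k=r$ is used exactly for the positivity of the last factor $c_r$.

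With all $c_i>0$ and $\det(H)>0$, I apply $\log$ to
\[
\det(H+\Delta_r)=\det(H)\prod_{i=1}^r c_i
\]
and use $\log(ab)=\log a+\log b$ for positive reals. This gives the stated formula.

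An equivalent route is to take logarithms of the one-step identity, $\log\det(H+\Delta_i)-\log\det(H+\Delta_{i-1})=\log c_i$, and sum over $i=1,\dots,r$. The left-hand side telescopes to $\log\det(H+\Delta_r)-\log\det(H)$, which is the claim.
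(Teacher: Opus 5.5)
Your proof is correct and follows the route the paper intends: the paper gives no separate proof of Corollary~\ref{cor:log} and treats it as the result of taking logarithms in Theorem~\ref{thm:logdet}, whose proof contains the one-step identity you use. Your factor-by-factor check, $c_i=\det(H+\Delta_i)/\det(H+\Delta_{i-1})>0$, supplies the detail the paper leaves implicit and shows why every intermediate determinant must be assumed positive.
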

	\begin{exmp}
		\label{ex:contribution}
		Let
		\(
		H = I_n
		\)
		and consider a sequence of rank-one updates
		\(
		\Delta_k = \sum_{i=1}^k u_i u_i^T.
		\)
		Then
		\(
		H + \Delta_k = I_n + \sum_{i=1}^k u_i u_i^T.
		\)
		Since $(I_n + \Delta_{i-1})^{-1}$ is positive definite, we have
		\(
		u_i^T (I_n + \Delta_{i-1})^{-1} u_i \ge 0.
		\)
		Hence, by Corollary~\ref{cor:log},
		\[
		\log \det(H + \Delta_k)
		=
		\sum_{i=1}^k
		\log
		\left(
		1 + u_i^T (I_n + \Delta_{i-1})^{-1} u_i
		\right),
		\]
		which is a nondecreasing sequence.
		Moreover, each term
		\[
		\log
		\left(
		1 + u_i^T (I_n + \Delta_{i-1})^{-1} u_i
		\right)
		\]
		represents the incremental contribution of the vector $u_i$.
		
		To make this interpretation precise, observe that the matrix
		$I_n + \Delta_{i-1}$ is symmetric positive definite. Hence it admits
		an eigenvalue decomposition
		\(
		I_n + \Delta_{i-1} = Q \Lambda Q^T,
		\)
		where $\Lambda = \diag(\lambda_1,\dots,\lambda_n)$ with
		$\lambda_j > 0$, and $Q$ is orthogonal.
		
		Then
		\[
		u_i^T (I_n + \Delta_{i-1})^{-1} u_i
		=
		\sum_{j=1}^n \frac{\alpha_j^2}{\lambda_j},
		\]
		where $\alpha_j$ are the coordinates of $u_i$ in the eigenbasis
		of $I_n + \Delta_{i-1}$.
		This expression shows that the contribution of $u_i$ depends on
		its alignment with the eigenvectors of $I_n + \Delta_{i-1}$, weighted
		by the corresponding eigenvalues.
		If $u_i$ lies predominantly in directions associated with large
		eigenvalues $\lambda_j$ (i.e., directions already well represented
		in $\Delta_{i-1}$), then the ratios $\alpha_j^2 / \lambda_j$
		are small, and consequently
		\(
		u_i^T (I_n + \Delta_{i-1})^{-1} u_i
		\)
		is small. In this case, the increment
		\[
		\log\left(1 + u_i^T (I_n + \Delta_{i-1})^{-1} u_i\right)
		\]
		is also small.
		
		On the other hand, if $u_i$ has a significant component in directions
		where $\lambda_j$ are close to $1$, that is, directions not yet well
		represented in $\Delta_{i-1}$, then the quantity
		\[
		u_i^T (I_n + \Delta_{i-1})^{-1} u_i
		\]
		is larger, leading to a larger increase in the log-determinant.
		Thus, the magnitude of the increment reflects how much new
		independent direction is introduced by $u_i$. Directions already
		present in $\Delta_{i-1}$ contribute less, while new directions
		contribute more.
		This shows that the log-determinant naturally quantifies
		the accumulation of independent directions or information.
	\end{exmp}
	
	The above representation shows that the log-determinant evolves
	additively under finite-rank perturbations. Each term provides a
	quantitative measure of how much the current update increases
	the volume or information associated with the matrix.

	\section{A singular extension with the Drazin inverse}\label{sec:A singular extension via the Drazin inverse}
	In the nonsingular case, the matrix determinant lemma
	provides the formula
	\[
	\det(H+UV^T)
	=
	\det(H)\det(I+V^T H^{-1} U).
	\]
	This identity follows by factoring out $H$ as
	\(
	H+UV^T = H\bigl(I+H^{-1}UV^T\bigr)
	\)
	and applying the multiplicativity of the  Sylvester's determinant identity
	$\det(I+AB)=\det(I+BA)$ (see, e.g., \cite{HornJohnson1985,AKRITAS1996585} for a more general formulation and context).
	It is therefore natural to ask whether a similar identity
	can be extended to singular matrices by replacing the inverse
	with a generalized inverse such as the Drazin inverse.
	However, such a direct extension does not hold in general.
	If $\det(H)=0$, then identities of the above form
	are either false or degenerate unless additional structure
	is imposed.
	Instead, meaningful extensions can be obtained by replacing
	the determinant with the pseudodeterminant or by using
	regularized determinant expressions.

	\begin{lem}\label{lem:block_drazin}
		Let $H \in {\mathbb R}^{n \times n}$ satisfy $\ind(H)=1$, and let
		\[
		S^{-1}HS=
		\left[
		\begin{array}{cc}
			J & 0\\
			0 & 0
		\end{array}
		\right]
		\]
		with $J \in {\mathbb R}^{q \times q}$ nonsingular and $q+\nu=n$.
		Then
		\[
		H^D=
		S
		\left[
		\begin{array}{cc}
			J^{-1} & 0\\
			0 & 0
		\end{array}
		\right]
		S^{-1}
		\]
		and
		\[
		P_0=
		S
		\left[
		\begin{array}{cc}
			0 & 0\\
			0 & I_\nu
		\end{array}
		\right]
		S^{-1}.
		\]
	\end{lem}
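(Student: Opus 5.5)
The plan is to verify directly that the candidate matrix
\[
X=S\left[\begin{array}{cc} J^{-1}&0\\ 0&0\end{array}\right]S^{-1}
\]
satisfies the three defining relations of the Drazin inverse with $k=\ind(H)=1$. Then we invoke uniqueness of the Drazin inverse to conclude $X=H^D$. The formula for $P_0=I-HH^D$ will follow by a block computation.

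Write $\hat H=\left[\begin{array}{cc} J&0\\ 0&0\end{array}\right]$ and $\hat X=\left[\begin{array}{cc} J^{-1}&0\\ 0&0\end{array}\right]$, so that $H=S\hat HS^{-1}$ and $X=S\hat XS^{-1}$. Since conjugation by $S$ preserves products, each relation reduces to the block level.

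\textbf{Commutation.} We have
\[
\hat H\hat X=\hat X\hat H=\left[\begin{array}{cc} I_q&0\\ 0&0\end{array}\right].
\]

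\textbf{Outer inverse.} We have $\hat X\hat H\hat X=\hat X$, since $J^{-1}JJ^{-1}=J^{-1}$.

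\textbf{Index relation.} With $k=1$, we have $\hat H^2\hat X=\hat H$, since $J^2J^{-1}=J$.

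Conjugating back gives $HX=XH$, $XHX=X$ and $H^{2}X=H$, which are exactly the three characterizing relations of the Drazin inverse.

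The main point needing care is that the paper only "characterizes" $H^D$ by these relations. We must use, or briefly justify, that the solution is unique. Suppose $X,Y$ both satisfy the relations with $k=1$. The standard argument uses $X=XHX=X^2H=X^2H^2Y$ (by $H=H^2Y$, after commuting), and so on, to chain into $X=XHY$. By symmetry $Y=YHX$, and commutativity then gives $XHY=YHX$, hence $X=Y$. I would cite this standard uniqueness fact, e.g.\ from \cite{Drazin01081958}, rather than reprove it in detail.

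Finally, we compute
\[
P_0=I-HH^D=S\left(I_n-\left[\begin{array}{cc} I_q&0\\ 0&0\end{array}\right]\right)S^{-1}=S\left[\begin{array}{cc} 0&0\\ 0&I_\nu\end{array}\right]S^{-1},
\]
which completes the proof.
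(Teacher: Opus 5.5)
Your proposal is correct and follows essentially the same route as the paper: you verify the three Drazin relations at the block level, transfer them by conjugation with $S$ (the paper calls this similarity invariance, which tacitly relies on the same uniqueness you cite), and then read off $P_0=I-HH^D$. One caution about your uniqueness sketch: the step $XHY=YHX$ does not follow from commutativity with $H$, and given $X=XHY$ and $Y=YHX$ it is really equivalent to $X=Y$; the standard argument instead shows directly that $Y=HY^{2}=XH^{2}Y^{2}=XHY$, so citing Drazin's uniqueness result, as you propose, is the cleaner choice.
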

	
	\begin{pf}
		Define
		\[
		\widetilde H=
		\left[
		\begin{array}{cc}
			J & 0\\
			0 & 0
		\end{array}
		\right]
		\qquad\mbox{and}\qquad
		\widetilde X=
		\left[
		\begin{array}{cc}
			J^{-1} & 0\\
			0 & 0
		\end{array}
		\right].
		\]
		Since
		\(
		H=S\widetilde H S^{-1},
		\)
		it is enough to verify that \(\widetilde X\) satisfies the defining
		properties of the Drazin inverse of \(\widetilde H\).
		First,
		\[
		\widetilde H \widetilde X
		=
		\left[
		\begin{array}{cc}
			J & 0\\
			0 & 0
		\end{array}
		\right]
		\left[
		\begin{array}{cc}
			J^{-1} & 0\\
			0 & 0
		\end{array}
		\right]
		=
		\left[
		\begin{array}{cc}
			I_q & 0\\
			0 & 0
		\end{array}
		\right]
		=
		\widetilde X \widetilde H.
		\]
		Next,
		\[
		\widetilde X \widetilde H \widetilde X
		=
		\left[
		\begin{array}{cc}
			J^{-1} & 0\\
			0 & 0
		\end{array}
		\right]
		\left[
		\begin{array}{cc}
			J & 0\\
			0 & 0
		\end{array}
		\right]
		\left[
		\begin{array}{cc}
			J^{-1} & 0\\
			0 & 0
		\end{array}
		\right]
		=
		\left[
		\begin{array}{cc}
			J^{-1} & 0\\
			0 & 0
		\end{array}
		\right]
		=
		\widetilde X.
		\]
		Finally, since \(\ind(H)=1\), we must check
		\(
		\widetilde H^{\,2}\widetilde X=\widetilde H.
		\)
		Indeed,
		\[
		\widetilde H^{\,2}\widetilde X
		=
		\left[
		\begin{array}{cc}
			J^2 & 0\\
			0 & 0
		\end{array}
		\right]
		\left[
		\begin{array}{cc}
			J^{-1} & 0\\
			0 & 0
		\end{array}
		\right]
		=
		\left[
		\begin{array}{cc}
			J & 0\\
			0 & 0
		\end{array}
		\right]
		=
		\widetilde H.
		\]
		Thus \(\widetilde X\) is the Drazin inverse of \(\widetilde H\). By
		similarity invariance of the Drazin inverse, it follows that
		\[
		H^D
		=
		S\widetilde X S^{-1}
		=
		S
		\left[
		\begin{array}{cc}
			J^{-1} & 0\\
			0 & 0
		\end{array}
		\right]
		S^{-1}.
		\]
		
		For the projector \(P_0\), we use the definition
		\(
		P_0=I-HH^D.
		\)
		Since
		\[
		HH^D
		=
		S\widetilde H S^{-1}
		S\widetilde X S^{-1}
		=
		S(\widetilde H\widetilde X)S^{-1}
		=
		S
		\left[
		\begin{array}{cc}
			I_q & 0\\
			0 & 0
		\end{array}
		\right]
		S^{-1},
		\]
		we obtain
		\[
		P_0
		=
		I-
		S
		\left[
		\begin{array}{cc}
			I_q & 0\\
			0 & 0
		\end{array}
		\right]
		S^{-1}
		=
		S
		\left[
		\begin{array}{cc}
			0 & 0\\
			0 & I_\nu
		\end{array}
		\right]
		S^{-1}.
		\]
		This proves the result.
	\end{pf}
	
	\begin{lem}\label{lem:projected_columns}
		Let $H \in {\mathbb R}^{n \times n}$ satisfy $\ind(H)=1$ and let
		$P_0=I-HH^D$.
		If $U,V \in {\mathbb R}^{n \times r}$ satisfy
		\(
		P_0U=0,
		\
		V^TP_0=0,
		\)
		then for the decomposition induced by Lemma \ref{lem:block_drazin} one has
		\[
		S^{-1}U=
		\left[
		\begin{array}{c}
			U_1\\
			0
		\end{array}
		\right],
		\
		S^TV=
		\left[
		\begin{array}{c}
			V_1\\
			0
		\end{array}
		\right]
		\]
		for some matrices $U_1,V_1 \in {\mathbb R}^{q \times r}$.
	\end{lem}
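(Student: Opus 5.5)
The plan is to transport the two annihilation conditions into the block coordinates supplied by Lemma~\ref{lem:block_drazin} and read off the vanishing lower blocks directly. By that lemma,
\[
P_0 = S
\left[
\begin{array}{cc}
0 & 0\\
0 & I_\nu
\end{array}
\right]
S^{-1}.
\]
We then partition the transformed matrices conformally with the $q+\nu$ splitting. Write
\[
S^{-1}U=\left[\begin{array}{c} U_1\\ U_2\end{array}\right],
\qquad
S^{T}V=\left[\begin{array}{c} V_1\\ V_2\end{array}\right],
\]
where $U_1,V_1\in{\mathbb R}^{q\times r}$ and $U_2,V_2\in{\mathbb R}^{\nu\times r}$. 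The goal is to show $U_2=0$ and $V_2=0$.

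For the column condition, we multiply $P_0U=0$ on the left by $S^{-1}$. This gives
\[
\left[\begin{array}{cc}0&0\\0&I_\nu\end{array}\right]S^{-1}U=0,
\]
that is,
\[
\left[\begin{array}{c}0\\ U_2\end{array}\right]=0,
\]
so $U_2=0$.

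For the row condition, we multiply $V^TP_0=0$ on the right by $S$. This gives
\[
V^TS\left[\begin{array}{cc}0&0\\0&I_\nu\end{array}\right]=0.
\]
Since $V^TS=(S^TV)^T=[\,V_1^T\ \ V_2^T\,]$, the left-hand side equals $[\,0\ \ V_2^T\,]$. Hence $V_2=0$.

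There is no real obstacle here. The only point requiring care is bookkeeping: the row condition must be transported using $S^T$ rather than $S^{-1}$, because $V^T$ multiplies $P_0$ from the left while $S$ acts on $P_0$ from the right. This is why the statement is phrased in terms of $S^TV$. It is also worth noting that the blocks $U_1$ and $V_1$ are simply the upper blocks of the transformed matrices and are otherwise unconstrained.
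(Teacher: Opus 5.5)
Your proof is correct and follows the paper's own argument essentially verbatim: you multiply $P_0U=0$ on the left by $S^{-1}$ and $V^TP_0=0$ on the right by $S$, then read off that the lower $\nu$-row blocks of $S^{-1}U$ and $S^TV$ must vanish. Your explicit partition into $U_1,U_2$ and $V_1,V_2$, together with the identity $V^TS=(S^TV)^T$, simply spells out the steps the paper compresses into ``hence the lower block vanishes.''
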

	
	\begin{pf}
		From
		\(
		P_0U=0
		\)
		and Lemma \ref{lem:block_drazin} we obtain
		\[
		\left[
		\begin{array}{cc}
			0 & 0\\
			0 & I_\nu
		\end{array}
		\right]
		S^{-1}U=0,
		\]
		hence the lower block of $S^{-1}U$ must vanish.
		Similarly, from
		\(
		V^TP_0=0
		\)
		we obtain
		\[
		V^TS
		\left[
		\begin{array}{cc}
			0 & 0\\
			0 & I_\nu
		\end{array}
		\right]
		=0,
		\]
		which implies that the lower block of $S^TV$ vanishes.
	\end{pf}
	The preceding technical lemmas provide the structural decomposition
	of matrices with semisimple zero eigenvalue and the corresponding
	representation of the Drazin inverse and spectral projector. We are now
	in a position to derive a determinant identity that extends the classical
	matrix determinant lemma to the singular setting. In particular, the
	following result shows that, under natural compatibility conditions,
	the pseudodeterminant of a finite-rank perturbation admits a multiplicative
	representation involving the Drazin inverse, thereby capturing the effect
	of the perturbation on the nonzero spectrum.
	\begin{thm}\label{thm:pdet_drazin}
		Let $H \in {\mathbb R}^{n \times n}$ satisfy $\ind(H)=1$, and let
		$U,V \in {\mathbb R}^{n \times r}$ satisfy
		\(
		P_0U=0,
		\
		V^TP_0=0,
		\)
		where $P_0=I-HH^D$.
		Then
		\[
		\pdet(H+UV^T)
		=
		\pdet(H)\det(I_r+V^TH^DU).
		\]
	\end{thm}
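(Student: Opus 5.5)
The plan is to reduce everything to the block form supplied by Lemma~\ref{lem:block_drazin} and Lemma~\ref{lem:projected_columns}, and then to apply the classical (nonsingular) matrix determinant lemma to the nonsingular block $J$.

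\textbf{Step 1: conjugate the perturbed matrix.} I would fix $S$ as in Lemma~\ref{lem:block_drazin}. By Lemma~\ref{lem:projected_columns}, $S^{-1}U=\left[\begin{array}{c}U_1\\0\end{array}\right]$ and $S^TV=\left[\begin{array}{c}V_1\\0\end{array}\right]$. Then
\[
S^{-1}(H+UV^T)S=S^{-1}HS+(S^{-1}U)(S^TV)^T=
\left[\begin{array}{cc} J+U_1V_1^T & 0\\ 0 & 0\end{array}\right].
\]
So the spectrum of $H+UV^T$ is the spectrum of $J+U_1V_1^T$ together with $\nu$ additional zeros.

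\textbf{Step 2: rewrite the Drazin term in block form.} I would write
\[
V^TH^DU=(S^TV)^T\,(S^{-1}H^DS)\,(S^{-1}U).
\]
Inserting the block form of $H^D$ from Lemma~\ref{lem:block_drazin} gives $V^TH^DU=V_1^TJ^{-1}U_1$. Since $J$ is nonsingular, the classical lemma (equivalently, Sylvester's identity applied to $J(I_q+J^{-1}U_1V_1^T)$) yields
\[
\det(J+U_1V_1^T)=\det(J)\det(I_r+V_1^TJ^{-1}U_1)=\pdet(H)\det(I_r+V^TH^DU),
\]
where the last equality uses $\pdet(H)=\det(J)$ from Definition~\ref{def:singular_structure}.

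\textbf{Step 3: identify $\pdet(H+UV^T)$ with $\det(J+U_1V_1^T)$.} This step is the main obstacle. The identification holds exactly when $J+U_1V_1^T$ is nonsingular, because then the nonzero eigenvalues of $H+UV^T$ are precisely the eigenvalues of $J+U_1V_1^T$. By Step 2, this nonsingularity is equivalent to $\det(I_r+V^TH^DU)\neq 0$.

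If instead $\det(I_r+V^TH^DU)=0$, the right-hand side vanishes, whereas $\pdet$ is by definition a product of nonzero numbers. The identity can then hold only under a convention such as ``$\pdet$ of the block equals $\det$ of the block.'' I would therefore state explicitly that the proof covers the generic case $\det(I_r+V^TH^DU)\neq 0$, noting that this is also the case in which $\ind(H+UV^T)=1$ with the same zero block. In the degenerate case I would remark that the formula must be read as $\det(J+U_1V_1^T)$, that is, the determinant of $H+UV^T$ restricted to the invariant complement $\mathrm{range}(HH^D)$. Under that interpretation the same block computation proves the identity in all cases.
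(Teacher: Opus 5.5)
Your argument is the paper's own proof. It uses the same conjugation by $S$ via Lemmas~\ref{lem:block_drazin} and~\ref{lem:projected_columns}, the same identity $V^TH^DU=V_1^TJ^{-1}U_1$, and the same classical determinant lemma applied to $J$.

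Your Step~3 caveat is correct and exposes a real gap in the paper, which simply asserts $\pdet(H+UV^T)=\det(J+U_1V_1^T)$. Take $H=\diag(1,2,0)$, $U=(1,0,0)^T$ and $V=-U$. Then $\pdet(H+UV^T)=2$, while $\pdet(H)\det(I_1+V^TH^DU)=0$. So the extra hypothesis $\det(I_r+V^TH^DU)\neq 0$, or your restricted-determinant reading, is genuinely needed for the statement to hold.
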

	
	\begin{pf}
		By Lemma \ref{lem:block_drazin} there exists a nonsingular matrix $S$ such that
		\[
		S^{-1}HS=
		\left[
		\begin{array}{cc}
			J & 0\\
			0 & 0
		\end{array}
		\right],
		\
		H^D=
		S
		\left[
		\begin{array}{cc}
			J^{-1} & 0\\
			0 & 0
		\end{array}
		\right]
		S^{-1}.
		\]
		By Lemma \ref{lem:projected_columns},
		\[
		S^{-1}U=
		\left[
		\begin{array}{c}
			U_1\\
			0
		\end{array}
		\right],
		\
		S^TV=
		\left[
		\begin{array}{c}
			V_1\\
			0
		\end{array}
		\right]
		\]
		for some $U_1,V_1 \in {\mathbb R}^{q \times r}$.
		Hence
		\[
		S^{-1}(H+UV^T)S=
		\left[
		\begin{array}{cc}
			J+U_1V_1^T & 0\\
			0 & 0
		\end{array}
		\right].
		\]
		Therefore,
		\(
		\pdet(H+UV^T)=\det(J+U_1V_1^T).
		\)
		Applying the classical matrix determinant lemma to the nonsingular matrix $J$,
		we obtain
		\[
		\det(J+U_1V_1^T)
		=
		\det(J)\det(I_r+V_1^TJ^{-1}U_1).
		\]
		Now
		\[
		V^TH^DU
		=
		V^TS
		\left[
		\begin{array}{cc}
			J^{-1} & 0\\
			0 & 0
		\end{array}
		\right]
		S^{-1}U
		=
		V_1^TJ^{-1}U_1.
		\]
		Combining the above equalities gives
		\[
		\pdet(H+UV^T)
		=
		\pdet(H)\det(I_r+V^TH^DU).
		\]
		This completes the proof.
	\end{pf}
	
	Theorem \ref{thm:pdet_drazin} shows that, under compatibility conditions
	with the eigenprojector $P_0$, the singular case reduces exactly to the
	nonsingular case on the invariant subspace corresponding to the nonzero
	spectrum of $H$.
	
	\begin{rem}\rm
		The assumption $\ind(H)=1$ ensures that the eigenvalue zero is
		semisimple and allows a direct decomposition of $H$ into a
		nonsingular block and a zero block. This structure is crucial
		for reducing the problem to the nonsingular case.
		
		If $\ind(H)>1$, the situation becomes more involved. In this case,
		there exists a decomposition of the form
		\[
		S^{-1}HS =
		\left[
		\begin{array}{cc}
			J & 0\\
			0 & N
		\end{array}
		\right],
		\]
		where $J$ is nonsingular and $N$ is a nilpotent matrix with
		$N^k=0$ for some $k>1$.
		
		Under a finite-rank perturbation, the matrix $H + UV^T$ induces
		a coupling between the nonsingular and nilpotent parts. As a
		result, the determinant no longer depends solely on the block $J$,
		and additional terms involving the nilpotent structure appear.
		In particular, a direct identity of the form
		\(
		\pdet(H + UV^T)
		=
		\pdet(H)\det(I + V^T H^D U)
		\)
		does not hold in general without further structural assumptions.
		Instead, one obtains expansions that involve higher-order terms
		related to the nilpotent block.
		A possible approach is to consider regularized expressions of the form
		\(
		\det(H + \varepsilon I + UV^T),
		\)
		and analyze their asymptotic behavior as $\varepsilon \to 0^+$. In this setting,
		the leading-order term recovers the pseudodeterminant, while higher-order
		terms encode the influence of the nilpotent structure.
		A detailed treatment of the case $\ind(H)>1$ would require a refined
		analysis of Jordan chains and higher-order asymptotic contributions.
		In this setting, the zero eigenvalue is associated with nontrivial
		nilpotent blocks, so that the decomposition of $H$ involves terms of the form
		\[
		\left[
		\begin{array}{cc}
			J & 0\\
			0 & N
		\end{array}
		\right],
		\]
		where $N$ is nilpotent but nonzero. As a consequence, the inverse-like
		structure captured by the Drazin inverse is no longer reduced to a simple
		block inversion of $J$, and interactions between the nilpotent part and
		the perturbation $UV^T$ must be taken into account.
		Moreover, the regularized determinant $\det(H+\varepsilon I+UV^T)$
		exhibits a more intricate asymptotic behavior, involving higher-order
		terms in $\varepsilon$ beyond the leading contribution associated with
		the pseudodeterminant. These effects reflect the presence of Jordan chains
		and the coupling between generalized eigenvectors.
		A systematic treatment of this case is therefore substantially more
		involved and is left for future work.
	\end{rem}
	
	Motivated by the above discussion, we now derive a regularized
	determinant representation which remains valid in the singular case
	$\ind(H)=1$ and captures the contribution of the nonzero spectrum.
	
	\begin{thm}\label{thm:regularized_det}
		Let $H \in {\mathbb R}^{n \times n}$ satisfy $\ind(H)=1$, and let the algebraic
		multiplicity of the zero eigenvalue be equal to $\nu$.
		Let $U,V \in {\mathbb R}^{n \times r}$ satisfy
		\(
		P_0U=0,
		\
		V^TP_0=0.
		\)
		Then
		\[
		\lim_{\varepsilon \to 0} \varepsilon^{-\nu}\det(H+\varepsilon I+UV^T)
		=
		\pdet(H)\det(I_r+V^TH^DU).
		\]
	\end{thm}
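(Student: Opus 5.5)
The plan is to reuse the block reduction from the proof of Theorem~\ref{thm:pdet_drazin} and then pass to the limit in a block-diagonal determinant. By Lemma~\ref{lem:block_drazin} choose a nonsingular $S$ with $S^{-1}HS=\diag(J,0)$, where $J\in{\mathbb R}^{q\times q}$ is nonsingular and $q+\nu=n$. Here $\nu$ coincides with the algebraic multiplicity of the zero eigenvalue because $\ind(H)=1$. By Lemma~\ref{lem:projected_columns}, $S^{-1}U=[U_1^T\ 0]^T$ and $S^TV=[V_1^T\ 0]^T$. Since $S^{-1}(\varepsilon I)S=\varepsilon I$, the computation in the proof of Theorem~\ref{thm:pdet_drazin} extends verbatim:
\[
S^{-1}(H+\varepsilon I+UV^T)S=
\left[
\begin{array}{cc}
J+\varepsilon I_q+U_1V_1^T & 0\\
0 & \varepsilon I_\nu
\end{array}
\right].
\]

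By similarity invariance of the determinant and the block-diagonal structure,
\(
\det(H+\varepsilon I+UV^T)=\varepsilon^{\nu}\det(J+\varepsilon I_q+U_1V_1^T).
\)
Hence, for every $\varepsilon\neq 0$,
\(
\varepsilon^{-\nu}\det(H+\varepsilon I+UV^T)=\det(J+\varepsilon I_q+U_1V_1^T).
\)
The right-hand side is a polynomial in $\varepsilon$, so it is continuous. Letting $\varepsilon\to 0$ therefore gives the limit $\det(J+U_1V_1^T)$.

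It remains to identify this limit. From the proof of Theorem~\ref{thm:pdet_drazin}, $\det(J+U_1V_1^T)=\det(J)\det(I_r+V_1^TJ^{-1}U_1)$, together with $\det(J)=\pdet(H)$ and $V^TH^DU=V_1^TJ^{-1}U_1$. Alternatively, one may observe that the limit equals $\pdet(H+UV^T)$ and invoke Theorem~\ref{thm:pdet_drazin} directly. Either route yields $\pdet(H)\det(I_r+V^TH^DU)$.

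No step is a genuine obstacle. The only point requiring care is that the regularizing term $\varepsilon I$ commutes with the similarity and leaves the off-diagonal blocks zero. This holds precisely because of the compatibility conditions $P_0U=0$ and $V^TP_0=0$, which decouple the perturbation from the zero block, so that exactly the factor $\varepsilon^\nu$ splits off. A second small point is that the $q\times q$ block need not be nonsingular for small $\varepsilon$. This does not matter, since no inverse of that block is used before passing to the limit.
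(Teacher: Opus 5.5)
Your argument is correct and is essentially the paper's proof. It uses the same block reduction via Lemmas~\ref{lem:block_drazin} and~\ref{lem:projected_columns}, splits off $\varepsilon^\nu$ from the $\varepsilon I_\nu$ block, passes to the limit, and identifies $\det(J+U_1V_1^T)$ through the matrix determinant lemma on $J$, $\det(J)=\pdet(H)$, and $V^TH^DU=V_1^TJ^{-1}U_1$.

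Drop the suggested alternative route, though. The limit $\det(J+U_1V_1^T)$ equals $\pdet(H+UV^T)$ only when $J+U_1V_1^T$ is nonsingular. In the paper's $3\times 3$ example with $p=1$, the limit is $0$ while $\pdet(A_s)=-2$. So that route, and Theorem~\ref{thm:pdet_drazin} itself, implicitly requires $\det(I_r+V^TH^DU)\neq 0$.
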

	
	\begin{pf}
		Using the same decomposition as in the proof of
		Theorem \ref{thm:pdet_drazin}, we obtain
		\[
		S^{-1}(H+\varepsilon I+UV^T)S=
		\left[
		\begin{array}{cc}
			J+\varepsilon I_q+U_1V_1^T & 0\\
			0 & \varepsilon I_\nu
		\end{array}
		\right].
		\]

		Hence
		\[
		\det(H+\varepsilon I+UV^T)
		=
		\varepsilon^\nu\det(J+\varepsilon I_q+U_1V_1^T).
		\]
		Therefore,
		\[
		\varepsilon^{-\nu}\det(H+\varepsilon I+UV^T)
		=
		\det(J+\varepsilon I_q+U_1V_1^T).
		\]
		Passing to the limit as $\varepsilon \to 0$ gives
		\[
		\lim_{\varepsilon \to 0} \varepsilon^{-\nu}\det(H+\varepsilon I+UV^T)
		=
		\det(J+U_1V_1^T).
		\]
		
		Since $J$ is nonsingular, the matrix determinant lemma yields
		\[
		\det(J+U_1V_1^T)
		=
		\det(J)\det(I_r+V_1^T J^{-1}U_1).
		\]
		By the definition of the pseudodeterminant, we have
		\(
		\det(J)=\pdet(H),
		\)
		and by the block representation of the Drazin inverse,
		\[
		H^D = S
		\left[
		\begin{array}{cc}
			J^{-1} & 0\\
			0 & 0
		\end{array}
		\right]
		S^{-1},
		\]
		together with the assumptions $P_0U=0$ and $V^TP_0=0$, it follows that
		\(
		V_1^T J^{-1} U_1 = V^T H^D U.
		\)
		Therefore,
		\[
		\det(J+U_1V_1^T)
		=
		\pdet(H)\det(I_r+V^T H^D U),
		\]
		which proves the result.
	\end{pf}
	
	Theorem \ref{thm:regularized_det} provides a regularized determinant
	representation for singular matrices. It can be interpreted as a
	regularized counterpart of Theorem~\ref{thm:pdet_drazin}, which gives
	a direct formula for the pseudodeterminant under compatibility
	conditions with the nullspace. In contrast, Theorem
	\ref{thm:regularized_det} derives the same structure through a limiting
	argument, allowing one to work within a nonsingular framework.
	This formulation is particularly useful for spectral analysis, as it
	separates the contribution of the nonzero spectrum from the singular
	part and enables stable limiting arguments.

	\section{Spectral implications of low-rank perturbations}\label{sec:Spectral implications of low-rank perturbations}
	
	In this section we study the effect of finite-rank perturbations
	on the characteristic polynomial and on the spectrum of a matrix.

	\begin{thm}\label{thm:charpoly_decomposition}
		Let $A \in {\mathbb R}^{n \times n}$ and let $U,V \in {\mathbb R}^{n \times r}$.
		Then
		\[
		\det(\lambda I - A - UV^T)
		=
		\det(\lambda I - A)
		-
		\sum_{i=1}^r
		v_i^T \adj(\lambda I - A - \Delta_{i-1}) u_i.
		\]
	\end{thm}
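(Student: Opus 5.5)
The identity follows directly from Theorem~\ref{thm:main}, applied with $H=\lambda I - A$ and with the rank-one updates $(-u_i)v_i^T$ in place of $u_iv_i^T$. The plan is as follows.

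Fix $\lambda$, real or complex; Theorem~\ref{thm:rank1} is a polynomial identity in the entries, so it holds over $\mathbb{C}$ as well, or, if one prefers, for each real $\lambda$ and then as a polynomial identity in $\lambda$. Set $H_\lambda=\lambda I-A$ and $\tilde u_i=-u_i$. Then
\[
\widetilde\Delta_k=\sum_{j=1}^k \tilde u_j v_j^T=-\Delta_k ,
\qquad
H_\lambda+\widetilde\Delta_k=\lambda I-A-\Delta_k .
\]
In particular, for $k=r$ this gives $H_\lambda+\widetilde\Delta_r=\lambda I-A-UV^T$.

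Invoking Theorem~\ref{thm:main} with these data and $k=r$ gives
\[
\det(\lambda I-A-UV^T)=\det(\lambda I-A)+\sum_{i=1}^r v_i^T\adj(\lambda I-A-\Delta_{i-1})(-u_i).
\]
Pulling the scalar $-1$ out by linearity of the bilinear form $v_i^T(\cdot)u_i$ produces exactly the claimed formula. Equivalently, one can argue step by step: the recursion $D_k=D_{k-1}-v_k^T\adj(\lambda I-A-\Delta_{k-1})u_k$ comes from Theorem~\ref{thm:rank1}, and telescoping it gives the same result.

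There is essentially no obstacle here. The only points needing care are the sign bookkeeping, namely that the perturbation enters as $-UV^T$ relative to $\lambda I-A$ (so the intermediate matrices are $\lambda I-A-\Delta_{i-1}$ and not $+\Delta_{i-1}$), and the remark that the adjugate identity holds for every $\lambda$, so both sides agree as polynomials in $\lambda$. No invertibility of $\lambda I-A-\Delta_{i-1}$ is needed, because Theorem~\ref{thm:rank1} is adjugate-based.
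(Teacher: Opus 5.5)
Your proposal is correct and is exactly the paper's argument: apply Theorem~\ref{thm:main} to $H=\lambda I-A$ with the rank-one updates $(-u_i)v_i^T$, which makes the intermediate matrices $\lambda I-A-\Delta_{i-1}$, and then pull out the sign. Your added remark that the rank-one identity is polynomial in the entries, so it also holds for complex $\lambda$, is a small point of rigor that the paper leaves implicit.
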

	
	\begin{pf}
		Apply Theorem \ref{thm:main} to the matrix
		\(
		H=\lambda I-A
		\)
		and to the rank-one updates
		\(
		(-u_i)v_i^T, \ i=1,\dots,r.
		\)
		Then
		\[
		\det(\lambda I-A-UV^T)
		=
		\det(\lambda I-A)
		+
		\sum_{i=1}^r
		v_i^T \adj\left((\lambda I-A)-\Delta_{i-1}\right)(-u_i),
		\]
		which yields
		\[
		\det(\lambda I-A-UV^T)
		=
		\det(\lambda I-A)
		-
		\sum_{i=1}^r
		v_i^T \adj(\lambda I-A-\Delta_{i-1})u_i.
		\]
	\end{pf}

	Theorem \ref{thm:charpoly_decomposition} provides an explicit
	representation of the characteristic polynomial under finite-rank
	perturbations.

	\begin{thm}\label{thm:eigenvalue_condition}
		Let $\lambda$ be such that $\lambda I - A - \Delta_{i-1}$ is nonsingular.
		Then $\lambda$ is an eigenvalue of $A + \Delta_i$ if and only if
		\(
		1 - v_i^T (\lambda I - A - \Delta_{i-1})^{-1} u_i = 0.
		\)
	\end{thm}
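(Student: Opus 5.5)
The plan is to reduce the statement to a single rank-one update of the matrix $\lambda I - A - \Delta_{i-1}$. Set $M = \lambda I - A - \Delta_{i-1}$. Since $\Delta_i = \Delta_{i-1} + u_i v_i^T$, we have
\[
\lambda I - A - \Delta_i = M + (-u_i)v_i^T .
\]
The number $\lambda$ is an eigenvalue of $A+\Delta_i$ if and only if $\det(\lambda I - A - \Delta_i)=0$. So everything reduces to computing this determinant in terms of $M$.

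Apply Theorem~\ref{thm:rank1} to $M$ with the vectors $-u_i$ and $v_i$. This gives
\[
\det(\lambda I - A - \Delta_i) = \det(M) - v_i^T \adj(M) u_i .
\]
This is exactly the $i$-th incremental step appearing in Theorem~\ref{thm:charpoly_decomposition}. By hypothesis $M$ is nonsingular, so $\adj(M)=\det(M)M^{-1}$, as used in the proof of Theorem~\ref{thm:logdet}. Factoring out $\det(M)$ yields
\[
\det(\lambda I - A - \Delta_i) = \det(M)\left(1 - v_i^T M^{-1} u_i\right).
\]

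Since $\det(M)\neq 0$, the left-hand side vanishes if and only if $1 - v_i^T M^{-1}u_i = 0$. This gives both directions of the equivalence at once. Substituting back $M=\lambda I - A - \Delta_{i-1}$ recovers the condition in the statement.

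There is no substantial obstacle. The only points needing care are keeping the sign of the update straight, since it is $-u_i v_i^T$ rather than $+u_i v_i^T$, and noting that the nonsingularity assumption is precisely what lets us divide by $\det(M)$. If $\lambda$ were already an eigenvalue of $A+\Delta_{i-1}$, the criterion would be undefined, and only the adjugate form would remain valid.
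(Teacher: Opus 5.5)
Your proof is correct and follows the paper's argument essentially verbatim. You set $M=\lambda I-A-\Delta_{i-1}$, apply Theorem~\ref{thm:rank1} to $M$ with the update $-u_iv_i^T$, use nonsingularity to write the determinant as $\det(M)\left(1-v_i^TM^{-1}u_i\right)$, and read off the equivalence, making explicit the step $\adj(M)=\det(M)M^{-1}$ that the paper leaves implicit.
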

	
	\begin{pf}
		Since
		\(
		A + \Delta_i = A + \Delta_{i-1} + u_i v_i^T,
		\)
		we have
		\(
		\lambda I - A - \Delta_i
		=
		(\lambda I - A - \Delta_{i-1}) - u_i v_i^T.
		\)
		Let
		\(
		M = \lambda I - A - \Delta_{i-1}.
		\)
		Then $M$ is nonsingular by assumption, and
		\(
		\det(\lambda I - A - \Delta_i)
		=
		\det(M - u_i v_i^T).
		\)
		By Theorem \ref{thm:rank1}, applied to the matrix $M$ and the
		rank-one perturbation $-u_i v_i^T$, we obtain
		\(
		\det(M - u_i v_i^T)
		=
		\det(M)
		\left(
		1 - v_i^T M^{-1} u_i
		\right).
		\)
		Hence
		\[
		\det(\lambda I - A - \Delta_i)
		=
		\det(\lambda I - A - \Delta_{i-1})
		\left(
		1 - v_i^T (\lambda I - A - \Delta_{i-1})^{-1} u_i
		\right).
		\]
		Therefore $\lambda$ is an eigenvalue of $A + \Delta_i$ if and only if
		\(
		1 - v_i^T (\lambda I - A - \Delta_{i-1})^{-1} u_i = 0.
		\)
	\end{pf}
	Theorem \ref{thm:eigenvalue_condition} shows that each rank-one
	perturbation introduces a scalar nonlinear equation governing the
	shift of eigenvalues. This condition is closely related to the
	resolvent of the perturbed matrix, which admits an explicit
	representation under rank-one updates.

	\begin{thm}\label{thm:stability_shift}
		Assume that all eigenvalues of $A$ lie in the open left half-plane.
		Let $u,v \in {\mathbb R}^n$. If
		\(
		1 - v^T (\lambda I - A)^{-1} u \neq 0
		\)
		for all $\lambda$ with $\Re(\lambda)\ge 0$, then all eigenvalues of
		$A + uv^T$ also lie in the open left half-plane.
	\end{thm}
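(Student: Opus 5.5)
The plan is to argue by contradiction using the factorization of the characteristic polynomial already obtained in the proof of Theorem~\ref{thm:eigenvalue_condition}, specialized to a single rank-one update ($r=1$, $\Delta_0=0$, $u_1=u$, $v_1=v$). For every $\lambda$ for which $\lambda I-A$ is nonsingular, Theorem~\ref{thm:rank1} combined with $\adj(\lambda I-A)=\det(\lambda I-A)(\lambda I-A)^{-1}$ gives
\[
\det(\lambda I-A-uv^T)=\det(\lambda I-A)\left(1-v^T(\lambda I-A)^{-1}u\right).
\]
Alternatively, Theorem~\ref{thm:charpoly_decomposition} gives the polynomial identity $\det(\lambda I-A-uv^T)=\det(\lambda I-A)-v^T\adj(\lambda I-A)u$ for all $\lambda\in\mathbb{C}$.

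Suppose that $\mu$ is an eigenvalue of $A+uv^T$ with $\Re(\mu)\ge 0$. Since all eigenvalues of $A$ lie in the open left half-plane, $\mu$ is not an eigenvalue of $A$, so $\mu I-A$ is nonsingular and $\det(\mu I-A)\neq 0$. Evaluating the factorization above at $\lambda=\mu$, the left-hand side vanishes because $\mu$ is an eigenvalue of $A+uv^T$. The first factor on the right is nonzero, so $1-v^T(\mu I-A)^{-1}u=0$. This is exactly the criterion of Theorem~\ref{thm:eigenvalue_condition} with $i=1$. It contradicts the hypothesis that $1-v^T(\lambda I-A)^{-1}u\neq 0$ for every $\lambda$ with $\Re(\lambda)\ge 0$. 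Hence every eigenvalue of $A+uv^T$ satisfies $\Re(\mu)<0$.

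The only point requiring care is that the hypothesis must be read over complex $\lambda$, since eigenvalues of the real matrix $A+uv^T$ may be complex. Theorem~\ref{thm:rank1} and the adjugate identities hold verbatim over $\mathbb{C}$, because they are polynomial identities in the matrix entries. We also need the resolvent $(\lambda I-A)^{-1}$ to be defined on the whole closed right half-plane, which follows from the stability of $A$. No genuine obstacle beyond this is expected: the statement is essentially the contrapositive of Theorem~\ref{thm:eigenvalue_condition} restricted to $\Re(\lambda)\ge0$.
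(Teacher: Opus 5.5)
Your proof is correct and follows essentially the same route as the paper. Both arguments take an eigenvalue $\mu$ of $A+uv^T$ with $\Re(\mu)\ge 0$, use stability of $A$ to make $\mu I-A$ nonsingular, and apply the rank-one factorization behind Theorem~\ref{thm:eigenvalue_condition} to reach a contradiction. Your version is slightly more careful on two points the paper leaves implicit: you restrict to $\Re(\mu)\ge 0$ before invoking that theorem, and you note that the identities carry over to complex $\lambda$.
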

	
	\begin{pf}
		Since all eigenvalues of $A$ lie in the open left half-plane, the matrix
		\(
		\lambda I-A
		\)
		is nonsingular for every $\lambda$ with $\Re(\lambda)\ge 0$.
		Let $\lambda$ be an eigenvalue of $A+uv^T$. Then
		\(
		\det(\lambda I-A-uv^T)=0.
		\)
		By Theorem \ref{thm:eigenvalue_condition}, this implies
		\(
		1-v^T(\lambda I-A)^{-1}u=0.
		\)
		The assumption of the theorem excludes this possibility for all
		$\lambda$ with $\Re(\lambda)\ge 0$. Therefore $A+uv^T$ has no
		eigenvalues in the closed right half-plane, and hence all its
		eigenvalues lie in the open left half-plane.
	\end{pf}
	Theorem \ref{thm:stability_shift} provides a frequency-domain
	condition for preservation of stability under rank-one perturbations.

	\section{Applications to estimation and control}\label{sec:Applications to estimation and control}
	
	In this section we demonstrate how the developed determinant
	identities can be applied in estimation and control problems.
	In particular, we focus on covariance updates and quantities
	based on the log-determinant, which play a central role in
	information-theoretic formulations of control and estimation.
	The derived representations reveal a common structural principle:
	determinant and log-determinant quantities evolve through successive
	contributions associated with individual directions. This viewpoint
	provides a unified interpretation across several areas of control.
	In estimation, it explains how information is accumulated through
	measurements. In recursive filtering, it quantifies the reduction
	of uncertainty. In reachability analysis, it characterizes the
	expansion of the reachable set.
	
	The section is organized as follows. We begin with covariance updates
	and their interpretation in terms of information accumulation.
	We then connect these results to Kalman-type filtering, where
	low-rank updates arise naturally in the information form.
	Next, we present a simple singular example that isolates the role
	of the pseudodeterminant and the Drazin inverse in a combined
	control and information setting. Finally, we turn to controllability
	Gramians, where the developed framework provides a geometric and
	quantitative description of the evolution of the reachable set.
	
	\subsection{Covariance updates and information accumulation}
	Consider a symmetric positive definite matrix
	\(
	P \in {\mathbb R}^{n \times n}
	\)
	and a sequence of rank-one updates of the form
	\[
	P_k = P + \Delta_k,
	\qquad
	\Delta_k = \sum_{i=1}^k u_i u_i^T,
	\qquad
	\Delta_0 = 0.
	\]
	In particular, $P_0 = P$.
	Such updates naturally arise in recursive estimation and Kalman
	filtering, where the covariance matrix is updated as new measurements
	are incorporated. They also appear in optimal experiment design and
	sensor placement, where the matrix $P_k$ represents the accumulated
	information from individual measurements.
	Applying the determinant identity derived in the previous section,
	we obtain
	\[
	\det(P_k)
	=
	\det(P)
	\prod_{i=1}^k
	\left(
	1 + u_i^T (P + \Delta_{i-1})^{-1} u_i
	\right),
	\]
	and consequently
	\[
	\log \det(P_k)
	=
	\log \det(P)
	+
	\sum_{i=1}^k
	\log
	\left(
	1 + u_i^T (P + \Delta_{i-1})^{-1} u_i
	\right).
	\]
	
	This representation shows that the log-determinant evolves additively
	under successive rank-one updates, with each term
	\(
	\log
	\left(
	1 + u_i^T (P + \Delta_{i-1})^{-1} u_i
	\right)
	\)
	capturing the incremental contribution of the direction $u_i$.
	From an estimation and control perspective, the quantity
	$\log \det(P_k)$ is widely used as a measure of uncertainty or
	information content. The above formula makes this interpretation
	explicit: each update contributes according to the quadratic form
	\(
	u_i^T (P + \Delta_{i-1})^{-1} u_i,
	\)
	which reflects how the new direction $u_i$ interacts with the
	current covariance structure.
	If $u_i$ lies in directions that are already well represented in
	$P + \Delta_{i-1}$, the corresponding contribution is small,
	whereas directions that are poorly represented yield significantly
	larger contributions. A detailed analysis of this mechanism is
	given in Example~\ref{ex:contribution}.
	This provides a precise analytical explanation of a phenomenon
	commonly observed in practice: redundant measurements yield
	diminishing returns, while measurements aligned with previously
	unobserved directions lead to substantial information gain.
	The derived identities therefore offer a transparent tool for
	analyzing and designing update strategies in estimation and
	control problems.
	
	The above representation not only provides an exact decomposition,
	but also enables a quantitative analysis of the rate at which the
	log-determinant evolves under successive updates. This can be made
	precise through the following bounds.
	
	\begin{thm}\label{thm:cov_bounds}
		Let $P \in {\mathbb R}^{n \times n}$ be symmetric positive definite and define
		\[
		P_0 = P,
		\qquad
		P_k = P + \sum_{i=1}^k u_i u_i^T,
		\quad k \ge 1.
		\]
		Then
		\[
		\log \det(P_k) - \log \det(P)
		=
		\sum_{i=1}^k
		\log\left(1 + u_i^T P_{i-1}^{-1} u_i\right).
		\]
		Moreover, the following bounds hold:
		\[
		\sum_{i=1}^k
		\frac{u_i^T P_{i-1}^{-1} u_i}{1 + u_i^T P_{i-1}^{-1} u_i}
		\;\le\;
		\log \frac{\det(P_k)}{\det(P)}
		\;\le\;
		\sum_{i=1}^k
		u_i^T P_{i-1}^{-1} u_i.
		\]
	\end{thm}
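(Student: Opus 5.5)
The plan is to obtain the identity from Corollary~\ref{cor:log} with $H=P$ and $v_i=u_i$. After that, the two bounds come from elementary scalar inequalities for $\log(1+x)$, applied term by term.

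\emph{Step 1: verify the hypotheses of Corollary~\ref{cor:log}.} Since $P$ is symmetric positive definite and each $u_iu_i^T$ is positive semidefinite, every $P_k = P+\Delta_k$ satisfies $x^TP_kx \ge x^TPx>0$ for $x\neq 0$. Hence each $P_k$ is symmetric positive definite. In particular it is nonsingular and $\det(P_k)>0$ for all $k\ge 0$. Corollary~\ref{cor:log}, or equivalently Theorem~\ref{thm:logdet} followed by taking logarithms, then gives
\[
\log\det(P_k)-\log\det(P)=\sum_{i=1}^k\log\left(1+u_i^TP_{i-1}^{-1}u_i\right),
\]
using $P_{i-1}=P+\Delta_{i-1}$. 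This proves the first claim.

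\emph{Step 2: nonnegativity of the increments.} Put $x_i = u_i^TP_{i-1}^{-1}u_i$. Since $P_{i-1}^{-1}$ is symmetric positive definite, $x_i\ge 0$. So every logarithm above has argument at least $1$ and is well defined. The left-hand side of the identity is also $\log(\det(P_k)/\det(P))$.

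\emph{Step 3: the scalar bounds.} I will use, for $x\ge 0$,
\[
\frac{x}{1+x}\le \log(1+x)\le x.
\]
For the upper bound, let $g(x)=x-\log(1+x)$. Then $g(0)=0$ and $g'(x)=x/(1+x)\ge 0$, so $g\ge 0$. For the lower bound, let $h(x)=\log(1+x)-x/(1+x)$. Then $h(0)=0$ and $h'(x)=1/(1+x)-1/(1+x)^2 = x/(1+x)^2\ge 0$, so $h\ge 0$. Alternatively, the lower bound is the upper bound applied to $\log\bigl(1/(1+x)\bigr)=\log\bigl(1-\tfrac{x}{1+x}\bigr)\le -\tfrac{x}{1+x}$.

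Applying these inequalities with $x=x_i$ and summing over $i=1,\dots,k$ gives both bounds. No step is a genuine obstacle. The only point needing care is Step 1, namely ensuring that positive definiteness, and hence the nonsingularity and positivity hypotheses, persists along the whole sequence.
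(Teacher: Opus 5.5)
Your proposal is correct and follows the paper's proof: the identity comes from Theorem~\ref{thm:logdet} (equivalently Corollary~\ref{cor:log}) with $H=P$ and $v_i=u_i$. The bounds come from the scalar inequalities $\frac{x}{1+x}\le\log(1+x)\le x$, applied to $x_i=u_i^TP_{i-1}^{-1}u_i\ge 0$ and summed; your check that every $P_k$ stays positive definite and your derivative proofs of these inequalities just fill in details the paper leaves implicit.
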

	
	\begin{pf}
		The identity follows directly from Theorem~\ref{thm:logdet}.
		
		To establish the bounds, observe that for all $x > -1$,
		\(
		\frac{x}{1+x} \le \log(1+x) \le x.
		\)
		Applying these inequalities with
		\(
		x = u_i^T P_{i-1}^{-1} u_i \ge 0
		\)
		and summing over $i=1,\dots,k$ yields the result.
	\end{pf}

	\subsection{Connection to Kalman-type updates}
	
	The covariance update described above admits a natural interpretation
	in the context of recursive estimation, in particular in the information
	form of the Kalman filter. In this formulation, the inverse covariance
	matrix evolves through additive low-rank updates, which makes it directly
	amenable to the determinant identities developed in this paper.
	
	Let $P \in {\mathbb R}^{n \times n}$ be a symmetric positive definite
	covariance matrix. In many estimation problems, it is convenient to work
	with the inverse covariance, which is updated as
	\(
	P^{-1} \to P^{-1} + C^T R^{-1} C,
	\)
	where $C$ denotes the measurement matrix and $R$ the noise covariance.
	In the case of sequential or scalar measurements, this leads to updates
	of the form
	\(
	P_k^{-1} = P^{-1} + \sum_{i=1}^k v_i v_i^T.
	\)
	Since $P^{-1} \succ 0$ and $v_i v_i^T \succeq 0$, it follows that
	$P_k^{-1} \succ 0$ and hence $P_k \succ 0$ for all $k \ge 0$.
	Applying the determinant identity for rank-one updates to $P^{-1}$,
	we obtain
	\[
	\det(P_k)
	=
	\det(P)
	\prod_{i=1}^k
	\frac{1}{1 + v_i^T P_{i-1} v_i},
	\]
	where $P_{i-1}$ denotes the covariance after $(i-1)$ updates.
	
	This representation makes explicit how uncertainty evolves as new
	measurements are incorporated. Each factor
	\(
	\frac{1}{1 + v_i^T P_{i-1} v_i}
	\)
	is strictly less than one whenever $v_i \neq 0$ and quantifies the
	reduction of the covariance volume due to the $i$-th measurement.
	Moreover, the above identity does not only provide a multiplicative
	representation, but also enables a quantitative analysis of the rate
	at which the covariance contracts under successive measurements.
	
	\begin{thm}\label{thm:kalman_monotone}
		Let $P \in {\mathbb R}^{n \times n}$ be symmetric positive definite and define
		\[
		P_0 = P,
		\qquad
		P_k^{-1} = P^{-1} + \sum_{i=1}^k v_i v_i^T,
		\quad k \ge 1.
		\]
		Then $P_k$ is symmetric positive definite for all $k \ge 0$, and
		\[
		\log \det(P_k)
		=
		\log \det(P)
		-
		\sum_{i=1}^k
		\log\left(1 + v_i^T P_{i-1} v_i\right).
		\]
		Moreover, the sequence $\det(P_k)$ is nonincreasing, and it is
		strictly decreasing whenever $v_i \neq 0$ for all $i=1,\dots,k$.
		If, in addition, there exists $\beta > 0$ such that
		\[
		v_i^T P_{i-1} v_i \ge \beta,
		\quad i=1,\dots,k,
		\] 
		then
		\[
		\det(P_k) \le \det(P)\,(1+\beta)^{-k}.
		\]
	\end{thm}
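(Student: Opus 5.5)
We plan to work entirely with the information matrices $Q_k := P_k^{-1} = P^{-1} + \sum_{i=1}^k v_i v_i^T$, which satisfy $Q_k = Q_{k-1} + v_k v_k^T$. The proof then reduces to Theorem~\ref{thm:logdet} (or Corollary~\ref{cor:log}) applied to $H = P^{-1}$ with $u_i = v_i$.

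\textbf{Step 1 (positive definiteness).} We show by induction that each $Q_k$ is symmetric positive definite. The matrix $Q_0 = P^{-1}$ is, since $P$ is. For any $x \neq 0$ we have $x^T Q_k x = x^T Q_{k-1} x + (v_k^T x)^2 > 0$, and symmetry is clear. Hence $Q_k$ is invertible and $P_k = Q_k^{-1}$ is symmetric positive definite. In particular all $Q_k$ are nonsingular with positive determinant, which are exactly the hypotheses of Theorem~\ref{thm:logdet} and Corollary~\ref{cor:log}.

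\textbf{Step 2 (log-determinant identity).} Theorem~\ref{thm:logdet} gives
\[
\det(Q_k) = \det(P^{-1}) \prod_{i=1}^k \bigl(1 + v_i^T Q_{i-1}^{-1} v_i\bigr).
\]
Since $Q_{i-1}^{-1} = P_{i-1}$ and $\det(P_k) = 1/\det(Q_k)$, taking reciprocals and logarithms yields the stated formula. Each factor satisfies $1 + v_i^T P_{i-1} v_i \ge 1 > 0$ because $P_{i-1} \succ 0$, so all logarithms are well defined.

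\textbf{Step 3 (monotonicity).} From the same recursion,
\[
\det(P_i) = \det(P_{i-1})\,\bigl(1 + v_i^T P_{i-1} v_i\bigr)^{-1}.
\]
As $v_i^T P_{i-1} v_i \ge 0$, the factor is at most $1$, so $\det(P_k)$ is nonincreasing. If $v_i \neq 0$, positive definiteness gives $v_i^T P_{i-1} v_i > 0$, so the factor is strictly less than $1$ and the sequence decreases strictly.

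\textbf{Step 4 (geometric bound).} Under $v_i^T P_{i-1} v_i \ge \beta$, each term satisfies $\log(1 + v_i^T P_{i-1} v_i) \ge \log(1+\beta)$. Summing over $i$ gives
\[
\log\det(P_k) \le \log\det(P) - k\log(1+\beta),
\]
and exponentiating gives $\det(P_k) \le \det(P)(1+\beta)^{-k}$.

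There is no real obstacle here. The only point requiring care is the bookkeeping: the quadratic form produced by Theorem~\ref{thm:logdet} involves $Q_{i-1}^{-1}$, and we must identify it with the covariance $P_{i-1}$ defined through the inverse recursion. This identification is exactly what Step~1 secures.
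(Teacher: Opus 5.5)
Your proposal is correct and follows essentially the same route as the paper. Both arguments establish positive definiteness of $P_k^{-1}=P^{-1}+\sum_{i\le k} v_iv_i^T$, apply the multiplicative rank-one identity (Theorem~\ref{thm:logdet}) to $H=P^{-1}$ with $u_i=v_i$ and identify $Q_{i-1}^{-1}=P_{i-1}$, then take reciprocals and logarithms and read off monotonicity and the $(1+\beta)^{-k}$ bound factor by factor; the only difference, which is cosmetic, is that you obtain the final bound at the logarithmic level rather than by bounding the product directly.
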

	
	\begin{pf}
		Since $P$ is symmetric positive definite, so is $P^{-1}$. Each matrix
		$v_i v_i^T$ is symmetric positive semidefinite, and therefore
		\(
		P_k^{-1} = P^{-1} + \sum_{i=1}^k v_i v_i^T
		\)
		is symmetric positive definite for every $k \ge 1$. Consequently, $P_k$
		is symmetric positive definite for all $k \ge 0$.
		Applying the determinant identity to $P^{-1}$, we obtain
		\[
		\det(P_k^{-1})
		=
		\det(P^{-1})
		\prod_{i=1}^k
		\left(
		1 + v_i^T P_{i-1} v_i
		\right).
		\]
		Since
		\(
		\det(P_k) = \det(P_k^{-1})^{-1},
		\)
		it follows that
		\[
		\det(P_k)
		=
		\det(P)
		\prod_{i=1}^k
		\frac{1}{1 + v_i^T P_{i-1} v_i}.
		\]
		Taking logarithms yields
		\[
		\log \det(P_k)
		=
		\log \det(P)
		-
		\sum_{i=1}^k
		\log\left(1 + v_i^T P_{i-1} v_i\right).
		\]
		
		Since $P_{i-1}$ is positive definite, we have
		\(
		v_i^T P_{i-1} v_i \ge 0
		\)
		for every $i$. Hence each factor
		\(
		\frac{1}{1 + v_i^T P_{i-1} v_i}
		\le 1,
		\)
		which shows that $\det(P_k)$ is nonincreasing. If $v_i \neq 0$, then
		\(
		v_i^T P_{i-1} v_i > 0,
		\)
		and therefore
		\(
		\frac{1}{1 + v_i^T P_{i-1} v_i} < 1.
		\)
		In this case, $\det(P_k)$ is strictly decreasing.
		If $v_i^T P_{i-1} v_i \ge \beta > 0$ for all $i=1,\dots,k$, then
		\(
		\frac{1}{1 + v_i^T P_{i-1} v_i}
		\le
		\frac{1}{1+\beta},
		\)
		and therefore
		\[
		\det(P_k)
		=
		\det(P)
		\prod_{i=1}^k
		\frac{1}{1 + v_i^T P_{i-1} v_i}
		\le
		\det(P)\,(1+\beta)^{-k},
		\]
		which proves the result.
	\end{pf}
	
	The above result admits a natural interpretation in information-theoretic
	terms. The quantity
	\(
	\log \det(P_k)
	\)
	is commonly used as a measure of uncertainty, and the representation
	derived above shows that each measurement contributes an additive term
	\(
	-\log\left(1 + v_i^T P_{i-1} v_i\right),
	\)
	which can be interpreted as the information gain associated with the
	$i$-th update.
	The magnitude of this contribution depends on the interaction between
	the measurement direction $v_i$ and the current covariance $P_{i-1}$.
	If $v_i$ corresponds to a direction with high uncertainty, then
	$v_i^T P_{i-1} v_i$ is large and the resulting reduction in
	$\det(P_k)$ is significant. Conversely, if the measurement is aligned
	with directions that are already well estimated, the contribution is
	small.
	
	This provides a precise analytical explanation of a fundamental
	property of recursive estimation: measurements that probe previously
	uncertain directions are the most informative, while redundant
	measurements yield diminishing returns. The derived identities
	therefore offer a transparent tool for analyzing and designing
	measurement strategies in Kalman-type filtering and related
	estimation problems.
	
	\subsection{A singular $3 \times 3$ example: control and information perspective}
	
	The covariance and Kalman-type updates considered above illustrate the
	nonsingular information-theoretic side of the proposed framework. We now
	turn to a simple singular example, which makes it possible to isolate
	the role of the pseudodeterminant and the Drazin inverse in a transparent
	control and information setting. In particular, this example shows how
	low-rank perturbations affect both the spectrum and the effective
	information content of the system.
	
	Consider the singular matrix
	\[
	A =
	\left[
	\begin{array}{ccc}
		-1 & 0 & 0\\
		0 & -2 & 0\\
		0 & 0 & 0
	\end{array}
	\right].
	\]
	This matrix can be interpreted as encoding a system with two
	informative directions and one direction with zero information
	(e.g., infinite uncertainty in a covariance interpretation).
	Accordingly, its determinant is zero, while its pseudodeterminant
	captures the nondegenerate part,
	\(
	\pdet(A)=(-1)(-2)=2.
	\)
	Since the zero eigenvalue is semisimple, the Drazin inverse is
	\[
	A^D =
	\left[
	\begin{array}{ccc}
		-1 & 0 & 0\\
		0 & -\frac{1}{2} & 0\\
		0 & 0 & 0
	\end{array}
	\right],
	\]
	and the spectral projector onto the nullspace is
	\[
	P_0 = I-AA^D =
	\left[
	\begin{array}{ccc}
		0 & 0 & 0\\
		0 & 0 & 0\\
		0 & 0 & 1
	\end{array}
	\right].
	\]

	\textit{Update on the informative subspace.}
	We first consider a rank-one update acting only on the
	informative directions. Let
	\[
	u_s =
	\left[
	\begin{array}{c}
		1\\
		0\\
		0
	\end{array}
	\right],
	\
	v_s =
	\left[
	\begin{array}{c}
		p\\
		q\\
		0
	\end{array}
	\right],
	\]
	and define
	\(
	A_s = A + u_s v_s^T.
	\)
	Since the update does not affect the nullspace, the assumptions
	of the pseudodeterminant formula are satisfied, and
	\(
	\pdet(A_s)
	=
	\pdet(A)\left(1+v_s^T A^D u_s\right).
	\)
	A direct computation yields
	\(
	v_s^T A^D u_s = -p,
	\
	\pdet(A_s)=2(1-p).
	\)
	
	This shows that the update modifies the information content
	within the already informative subspace, while the overall
	determinant remains zero due to the unchanged null direction.
	Thus, the pseudodeterminant isolates the effective information
	variation that would be invisible to the standard determinant.

	\textit{Update on the nullspace.}
	We now consider a rank-one update acting on the previously
	uninformative direction. Let
	\[
	u_0 =
	\left[
	\begin{array}{c}
		0\\
		0\\
		1
	\end{array}
	\right],
	\
	v_0 =
	\left[
	\begin{array}{c}
		a\\
		b\\
		c
	\end{array}
	\right],
	\]
	and define
	\(
	A_0 = A + u_0 v_0^T.
	\)
	By Theorem \ref{thm:rank1},
	\(
	\det(A_0)=2c.
	\)
	Hence, as soon as $c \neq 0$, the matrix becomes nonsingular.
	From an information-theoretic viewpoint, this corresponds to
	injecting information into a previously unobservable or
	uncertain direction.
	The spectral interpretation is explicit: the new eigenvalue is $c$,
	so the formerly neutral mode is shifted and becomes informative.
	This transition is also captured by the resolvent-based
	characterization of eigenvalues.

	\textit{Sequential updates and information accumulation.}
	Finally, consider the successive perturbation
	\(
	A_{2}=A+u_0 v_0^T + u_s v_s^T.
	\)
	Then
	\(
	\det(A_{2})=2c(1-p),
	\)
	which is recovered exactly by the determinant dynamics developed
	in this paper.
	This decomposition shows that each rank-one update contributes
	multiplicatively to the total determinant, or additively at the
	logarithmic level. In particular, the update $u_0 v_0^T$ activates
	a previously degenerate direction, while $u_s v_s^T$ modifies the
	information content within the already active subspace.

	Overall, this example illustrates how the proposed framework
	provides a transparent decomposition of structural and
	information-theoretic changes under low-rank perturbations.
	In particular, it distinguishes between
	\emph{activation of new directions} and
	\emph{refinement of existing ones}, both of which contribute
	to the evolution of determinant-based quantities. 
	
	\subsection{Gramian determinants and reachability}
	
	Consider the linear discrete-time system
	\begin{equation}
		x_{t+1} = A x_t + B u_t,
		\ x_0 = 0,
		\label{eq:linear_discrete}
	\end{equation}
	where $A \in {\mathbb R}^{n \times n}$ and $B \in {\mathbb R}^{n \times m}$.
	The state reached after $N$ steps is given by
	\[
	x_N = \sum_{i=0}^{N-1} A^{N-1-i} B u_i,
	\]
	which shows that each input $u_i$ is propagated through the system
	over $N-1-i$ steps.
	The set of all states reachable in $N$ steps coincides with
	$\operatorname{Im}(W_N)$, where
	\[
	W_N=\sum_{i=0}^{N-1}A^iBB^T(A^T)^i\in \mathbb{R}^{n\times n}
	\]
	is the finite-horizon controllability Gramian. Moreover, under a unit
	input-energy constraint, the reachable set is the ellipsoid
	\(
	\{x\in \operatorname{Im}(W_N): x^T W_N^D x\le 1\},
	\)
	which reduces to
	\(
	\{x\in \mathbb R^n: x^T W_N^{-1}x\le 1\}
	\)
	when $W_N$ is nonsingular.
	This matrix characterizes how input energy is mapped into the state
	space and quantifies the anisotropic distribution of reachable
	directions. In particular, $W_N$ is nonsingular if and only if the
	system is reachable over $N$ steps.
	The Gramian admits a natural decomposition into low-rank contributions.
	Writing $B = [b_1,\dots,b_m]$ and defining
	\(
	u_{(i,j)} = A^i b_j,
	\)
	we obtain
	\(
	W_N
	=
	\sum_{i=0}^{N-1} \sum_{j=1}^m
	u_{(i,j)} u_{(i,j)}^T.
	\)
	Reindexing these vectors as $\{u_\ell\}$ yields a representation of the form
	\(
	W_N = \sum_{\ell=1}^{Nm} u_\ell u_\ell^T,
	\)
	which allows the application of the determinant identities developed earlier.
	In order to apply multiplicative determinant formulas in a uniform way,
	including the singular case, we introduce a regularized sequence of
	partial sums
	\(
	\widetilde{W}_0 = \varepsilon I,
	\
	\widetilde{W}_\ell
	=
	\varepsilon I + \sum_{j=1}^{\ell} u_j u_j^T,
	\ \ell=1,\dots,Nm,
	\)
	so that $\widetilde{W}_{Nm} = \varepsilon I + W_N$.
	Each matrix $\widetilde{W}_\ell$ is symmetric positive definite.
	Applying the determinant identity for rank-one updates, we obtain
	\[
	\det(\widetilde{W}_{Nm})
	=
	\det(\widetilde{W}_0)
	\prod_{\ell=1}^{Nm}
	\left(
	1 + u_\ell^T \widetilde{W}_{\ell-1}^{-1} u_\ell
	\right).
	\]
	Since $\det(\widetilde{W}_0)=\det(\varepsilon I)=\varepsilon^n$, it follows that
	\[
	\det(\widetilde{W}_{Nm})
	=
	\varepsilon^n
	\prod_{\ell=1}^{Nm}
	\left(
	1 + u_\ell^T \widetilde{W}_{\ell-1}^{-1} u_\ell
	\right).
	\]
	
	Passing to the limit as $\varepsilon \to 0$ requires appropriate
	normalization. Since $W_N$ is symmetric positive semidefinite, let
	$\lambda_1,\dots,\lambda_r > 0$ denote its nonzero eigenvalues, where
	$r = \rank(W_N) \le n$. Then
	\(
	\det(\varepsilon I + W_N)
	=
	\varepsilon^{\,n-r} \prod_{i=1}^r (\lambda_i + \varepsilon).
	\)
	Dividing by $\varepsilon^{n-r}$ and passing to the limit yields
	\[
	\pdet(W_N)
	=
	\prod_{i=1}^r \lambda_i
	=
	\lim_{\varepsilon \to 0}
	\varepsilon^{-(n-r)} \det(\varepsilon I + W_N).
	\]
	
	This representation shows that the pseudodeterminant can be interpreted
	as the product of incremental contributions associated with directions
	that expand the reachable subspace. In particular, only those vectors
	$u_\ell$ that introduce new independent directions contribute to the
	leading-order term in the limit, whereas directions already contained
	in the current span affect only higher-order terms in $\varepsilon$.
	Moreover, this formulation provides a quantitative description of how
	reachability evolves as new directions are added. To avoid degeneracies associated with the initial singularity of
	$W_0 = 0$ and to obtain a representation valid in both singular
	and nonsingular settings, we introduce a regularized formulation.
	The pseudodeterminant then arises naturally as the limit of the
	regularized determinant and captures the intrinsic volume of the
	reachable set independently of rank deficiencies.
	
	\begin{thm}\label{thm:gramian_growth_pdet}
		Let
		\(
		W_N = \sum_{\ell=1}^{Nm} u_\ell u_\ell^T,
		\)
		and, for $\varepsilon > 0$, define the regularized partial sums
		\[
		\widetilde{W}_0(\varepsilon)=\varepsilon I,
		\
		\widetilde{W}_\ell(\varepsilon)
		=
		\varepsilon I + \sum_{j=1}^{\ell} u_j u_j^T,
		\
		\ell=1,\dots,Nm.
		\]
		Then
		\begin{equation}
			\det\bigl(\widetilde{W}_{Nm}(\varepsilon)\bigr)
			=
			\varepsilon^n
			\prod_{\ell=1}^{Nm}
			\left(
			1 + u_\ell^T \widetilde{W}_{\ell-1}(\varepsilon)^{-1} u_\ell
			\right).
			\label{eq:gramian_regularized_product}
		\end{equation}
		
		If \(r=\rank(W_N)\), then
		\begin{equation}
			\pdet(W_N)
			=
			\lim_{\varepsilon\to 0}
			\varepsilon^{-(n-r)}
			\det\bigl(\widetilde{W}_{Nm}(\varepsilon)\bigr).
			\label{eq:gramian_pdet_limit}
		\end{equation}
		Equivalently,
		\begin{equation}
			\pdet(W_N)
			=
			\lim_{\varepsilon\to 0}
			\varepsilon^{r}
			\prod_{\ell=1}^{Nm}
			\left(
			1 + u_\ell^T \widetilde{W}_{\ell-1}(\varepsilon)^{-1} u_\ell
			\right).
			\label{eq:gramian_pdet_product_limit}
		\end{equation}
		
		Moreover, if \(\pdet(W_N)>0\), then
		\[
		\log \pdet(W_N)
		=
		\lim_{\varepsilon\to 0}
		\left[
		\log \det\bigl(\widetilde{W}_{Nm}(\varepsilon)\bigr)
		-
		(n-r)\log \varepsilon
		\right].
		\]
	\end{thm}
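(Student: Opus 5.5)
The proof has four parts, each resting on earlier results: a product formula from Theorem~\ref{thm:logdet}, a spectral limit for the pseudodeterminant, an algebraic combination of the two, and a logarithm step.

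\emph{The product formula \eqref{eq:gramian_regularized_product}.} Fix $\varepsilon>0$ and apply Theorem~\ref{thm:logdet} with $H=\varepsilon I$ and $u_i=v_i=u_\ell$. We must check its hypothesis: each $\widetilde W_{\ell}(\varepsilon)=\varepsilon I+\sum_{j\le \ell}u_ju_j^T$ is nonsingular. This holds because $\widetilde W_\ell(\varepsilon)$ is symmetric positive definite, since $x^T\widetilde W_\ell(\varepsilon)x\ge \varepsilon\|x\|^2$. Theorem~\ref{thm:logdet} then gives the product directly, and $\det(\varepsilon I)=\varepsilon^n$ supplies the prefactor.

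\emph{The limit \eqref{eq:gramian_pdet_limit}.} Use the spectral theorem for the symmetric positive semidefinite matrix $W_N=Q\,\diag(\lambda_1,\dots,\lambda_r,0,\dots,0)Q^T$, where $\lambda_i>0$ and $r=\rank(W_N)$. Then
\[
\det(\varepsilon I+W_N)=\varepsilon^{n-r}\prod_{i=1}^r(\lambda_i+\varepsilon).
\]
Hence $\varepsilon^{-(n-r)}\det(\widetilde W_{Nm}(\varepsilon))$ is a polynomial in $\varepsilon$, and its value at $\varepsilon=0$ is $\prod_{i=1}^r\lambda_i=\pdet(W_N)$. Because it is a polynomial, the limit exists and equals this value. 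In the degenerate case $r=0$ we have $W_N=0$, and we use the convention that an empty product equals $1$. This is the same computation as in Theorem~\ref{thm:regularized_det}. There $H=W_N$ has index one because it is symmetric, and we take $U=V=0$; here we have simply redone it directly.

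\emph{The equivalent form \eqref{eq:gramian_pdet_product_limit}.} Substitute \eqref{eq:gramian_regularized_product} into \eqref{eq:gramian_pdet_limit}:
\[
\varepsilon^{-(n-r)}\varepsilon^n\prod_{\ell}\bigl(1+u_\ell^T\widetilde W_{\ell-1}(\varepsilon)^{-1}u_\ell\bigr)=\varepsilon^{r}\prod_{\ell}\bigl(1+u_\ell^T\widetilde W_{\ell-1}(\varepsilon)^{-1}u_\ell\bigr).
\]
The two expressions are identical for every $\varepsilon>0$, so the limits coincide. No separate control of the individual factors, which may blow up like $\varepsilon^{-1}$, is needed.

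\emph{The logarithmic statement.} Assume $\pdet(W_N)>0$. For $\varepsilon>0$ the quantity $\varepsilon^{-(n-r)}\det(\widetilde W_{Nm}(\varepsilon))$ is positive, and it converges to the positive number $\pdet(W_N)$. Continuity of $\log$ on $(0,\infty)$ then gives
\[
\log\pdet(W_N)=\lim_{\varepsilon\to0}\bigl[\log\det(\widetilde W_{Nm}(\varepsilon))-(n-r)\log\varepsilon\bigr].
\]

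\emph{Main obstacle.} The only delicate point is interpretive. The limits should be read as one-sided ($\varepsilon\to0^+$), since the regularization and the logarithms are defined only for $\varepsilon>0$. It also matters that the limit is taken of the whole product, not factor by factor. We will state both explicitly. Everything else is routine.
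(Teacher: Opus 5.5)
Your proposal is correct and follows the same route as the paper's proof. You apply the multiplicative rank-one identity (Theorem~\ref{thm:logdet}) starting from $\widetilde W_0(\varepsilon)=\varepsilon I$, justified by positive definiteness of the partial sums. You use the spectral factorization $\det(\varepsilon I+W_N)=\varepsilon^{n-r}\prod_{i=1}^r(\lambda_i+\varepsilon)$, substitute directly to get the product form, and take logarithms. The only blemish is your side remark tying this to Theorem~\ref{thm:regularized_det}: when $W_N$ is nonsingular its index is $0$, not $1$, but since you redo the computation directly, nothing depends on it.
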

	
	\begin{pf}
		For each fixed \(\varepsilon>0\), the matrix
		\(
		\widetilde{W}_{\ell-1}(\varepsilon)
		=
		\varepsilon I+\sum_{j=1}^{\ell-1}u_j u_j^T
		\)
		is symmetric positive definite, and hence nonsingular.
		Moreover,
		\(
		\widetilde{W}_{\ell}(\varepsilon)
		=
		\widetilde{W}_{\ell-1}(\varepsilon)+u_\ell u_\ell^T,
		\ \ell=1,\dots,Nm.
		\)
		Applying the rank-one determinant identity in multiplicative form
		to each step, we obtain
		\[
		\det\bigl(\widetilde{W}_{\ell}(\varepsilon)\bigr)
		=
		\det\bigl(\widetilde{W}_{\ell-1}(\varepsilon)\bigr)
		\left(
		1+u_\ell^T \widetilde{W}_{\ell-1}(\varepsilon)^{-1}u_\ell
		\right).
		\]
		Iterating this identity for \(\ell=1,\dots,Nm\) yields
		\[
		\det\bigl(\widetilde{W}_{Nm}(\varepsilon)\bigr)
		=
		\det\bigl(\widetilde{W}_0(\varepsilon)\bigr)
		\prod_{\ell=1}^{Nm}
		\left(
		1+u_\ell^T \widetilde{W}_{\ell-1}(\varepsilon)^{-1}u_\ell
		\right).
		\]
		Since
		\(
		\widetilde{W}_0(\varepsilon)=\varepsilon I,
		\
		\det(\varepsilon I)=\varepsilon^n,
		\)
		this gives (\ref{eq:gramian_regularized_product}).
		To prove (\ref{eq:gramian_pdet_limit}), let
		\(
		\lambda_1,\dots,\lambda_r>0
		\)
		be the nonzero eigenvalues of \(W_N\), where \(r=\rank(W_N)\le n\).
		Because \(W_N\) is symmetric positive semidefinite, its remaining
		eigenvalues are zero, and therefore
		\(
		\det(\varepsilon I+W_N)
		=
		\varepsilon^{\,n-r}\prod_{i=1}^r(\lambda_i+\varepsilon).
		\)
		Hence
		\(
		\varepsilon^{-(n-r)}\det(\varepsilon I+W_N)
		=
		\prod_{i=1}^r(\lambda_i+\varepsilon).
		\)
		Passing to the limit as \(\varepsilon\to 0\), we obtain
		\[
		\lim_{\varepsilon\to 0}
		\varepsilon^{-(n-r)}\det(\varepsilon I+W_N)
		=
		\prod_{i=1}^r\lambda_i
		=
		\pdet(W_N),
		\]
		which proves (\ref{eq:gramian_pdet_limit}).
		Since
		\(
		\widetilde{W}_{Nm}(\varepsilon)=\varepsilon I+W_N,
		\)
		combining (\ref{eq:gramian_pdet_limit}) with
		(\ref{eq:gramian_regularized_product}) gives
		\[
		\pdet(W_N)
		=
		\lim_{\varepsilon\to 0}
		\varepsilon^{-(n-r)}
		\det\bigl(\widetilde{W}_{Nm}(\varepsilon)\bigr)
		=
		\lim_{\varepsilon\to 0}
		\varepsilon^r
		\prod_{\ell=1}^{Nm}
		\left(
		1+u_\ell^T \widetilde{W}_{\ell-1}(\varepsilon)^{-1}u_\ell
		\right),
		\]
		which proves (\ref{eq:gramian_pdet_product_limit}).
		Finally, if \(\pdet(W_N)>0\), then taking logarithms in
		(\ref{eq:gramian_pdet_limit}) gives
		\[
		\log \pdet(W_N)
		=
		\lim_{\varepsilon\to 0}
		\left[
		\log \det(\varepsilon I+W_N) - (n-r)\log\varepsilon
		\right].
		\]
		Since \(\widetilde{W}_{Nm}(\varepsilon)=\varepsilon I+W_N\), this is
		exactly the stated formula.
	\end{pf}
	
	This representation provides a unified description of how the
	volume of the reachable subspace evolves under successive inputs.
	In particular, if $r = \rank(W_N)$, then $\pdet(W_N)$ is proportional
	to the squared volume of the $r$-dimensional ellipsoid induced by
	$W_N$ on the reachable subspace.
	
	This geometric interpretation can be made precise as follows.
	For the discrete-time system (2),
	the finite-horizon controllability Gramian $W_N$ characterizes
	the set of states reachable under energy-bounded inputs. More
	precisely, the set of states reachable in $N$ steps with unit
	input energy can be written as
	\(
	\mathcal{E}_N
	=
	\{x \in \operatorname{Im}(W_N):\ x^T W_N^D x \le 1\},
	\)
	and, in the nonsingular case,
	\(
	\mathcal{E}_N
	=
	\{x \in {\mathbb R}^n:\ x^T W_N^{-1} x \le 1\}.
	\)
	Thus, the Gramian defines an ellipsoid in the reachable subspace,
	whose principal axes and radii are determined by the eigenstructure
	of $W_N$. The normalization by $1$ in the inequality fixes the scale
	of the set.
	More precisely, if $W_N$ is nonsingular, then the ellipsoid
	\(
	\{x \in {\mathbb R}^n : x^T W_N^{-1} x \le 1\}
	\)
	has volume proportional to $\sqrt{\det(W_N)}$.
	In the singular case, the corresponding $r$-dimensional volume on the
	reachable subspace is proportional to $\sqrt{\pdet(W_N)}$, where
	$r = \rank(W_N)$.
	The regularized representation introduced above shows that this
	volume can be decomposed into successive contributions associated
	with individual input directions. Indeed, for $\varepsilon>0$ we have
	\[
	\det(\varepsilon I + W_N)
	=
	\varepsilon^n
	\prod_{\ell=1}^{Nm}
	\left(
	1 + u_\ell^T \widetilde W_{\ell-1}(\varepsilon)^{-1} u_\ell
	\right),
	\]
	where
	\(
	\widetilde W_{\ell-1}(\varepsilon)
	=
	\varepsilon I + \sum_{j=1}^{\ell-1} u_j u_j^T.
	\)
	Each factor
	\(
	1 + u_\ell^T \widetilde W_{\ell-1}(\varepsilon)^{-1} u_\ell
	\)
	quantifies the incremental contribution of the direction $u_\ell$
	relative to the current geometry encoded in the regularized Gramian.
	Directions already well represented produce only minor changes,
	whereas directions that are poorly represented lead to larger
	multiplicative contributions.
	Passing to the limit as $\varepsilon \to 0$, one recovers the
	pseudodeterminant through the normalization
	\(
	\pdet(W_N)
	=
	\lim_{\varepsilon\to 0}
	\varepsilon^{-(n-r)} \det(\varepsilon I + W_N),
	\)
	which shows that only those directions that expand the reachable
	subspace contribute nontrivially in the limit.
	Taken together, these results show that determinant-based
	representations provide a unified framework for analyzing
	information accumulation, uncertainty reduction, and reachability
	in linear systems. In particular, they establish a direct link
	between the geometry of the reachable set and the incremental
	contribution of individual input directions.
	
	To further illustrate the geometric mechanism underlying successive
	rank-one updates, we visualize the evolution of the Gramian-induced
	ellipsoid; see Fig.~\ref{fig:gramian_ellipsoid}. Starting from the
	regularized matrix $W_0=\varepsilon I$, the associated ellipsoid
	\(
	\{x \in {\mathbb R}^n : x^T W_0^{-1} x \le 1\}
	\)
	represents the set of states reachable with bounded input energy.
	The normalization by $1$ fixes the energy level and ensures that
	the volume of the set is directly related to the determinant.
	In this case, the ellipsoid reduces to the Euclidean ball
	\(
	\{x \in {\mathbb R}^n : \|x\| \le \sqrt{\varepsilon}\},
	\)
	that is, a circle of radius $\sqrt{\varepsilon}$ in two dimensions.
	This follows from the identity $W_0^{-1} = \varepsilon^{-1} I$, which
	yields $x^T W_0^{-1} x = \|x\|^2 / \varepsilon \le 1$.
	Each subsequent update of the form $u_\ell u_\ell^T$ deforms and expands
	this ellipsoid. The first update introduces a dominant direction aligned
	with $u_1$, resulting in an elongated ellipsoid. The effect of the second
	update depends critically on its alignment with the existing structure:
	if $u_2$ is aligned with $u_1$, the contribution is small, whereas a
	component orthogonal to $u_1$ produces a significant expansion.
	This behavior is consistent with the quadratic form
	\(
	u_2^T W_1^{-1} u_2,
	\)
	which quantifies the contribution of $u_2$ relative to the current
	Gramian. In particular, directions that are poorly represented in
	$W_1$ lead to larger values of this expression and therefore induce
	a stronger expansion of the ellipsoid.
	This visualization highlights the multiplicative structure derived
	in the paper: the volume of the reachable set evolves through
	successive directional contributions, with each update expanding
	the ellipsoid according to its novelty relative to the current
	subspace.
	More precisely, the Gramian $W_N$ defines an ellipsoid whose volume
	is proportional to $\sqrt{\pdet(W_N)}$, so that the pseudodeterminant
	captures the squared volume of the reachable set restricted to its
	intrinsic subspace.
	\begin{figure*}[t]
		\centering
		\includegraphics[width=0.98\textwidth]{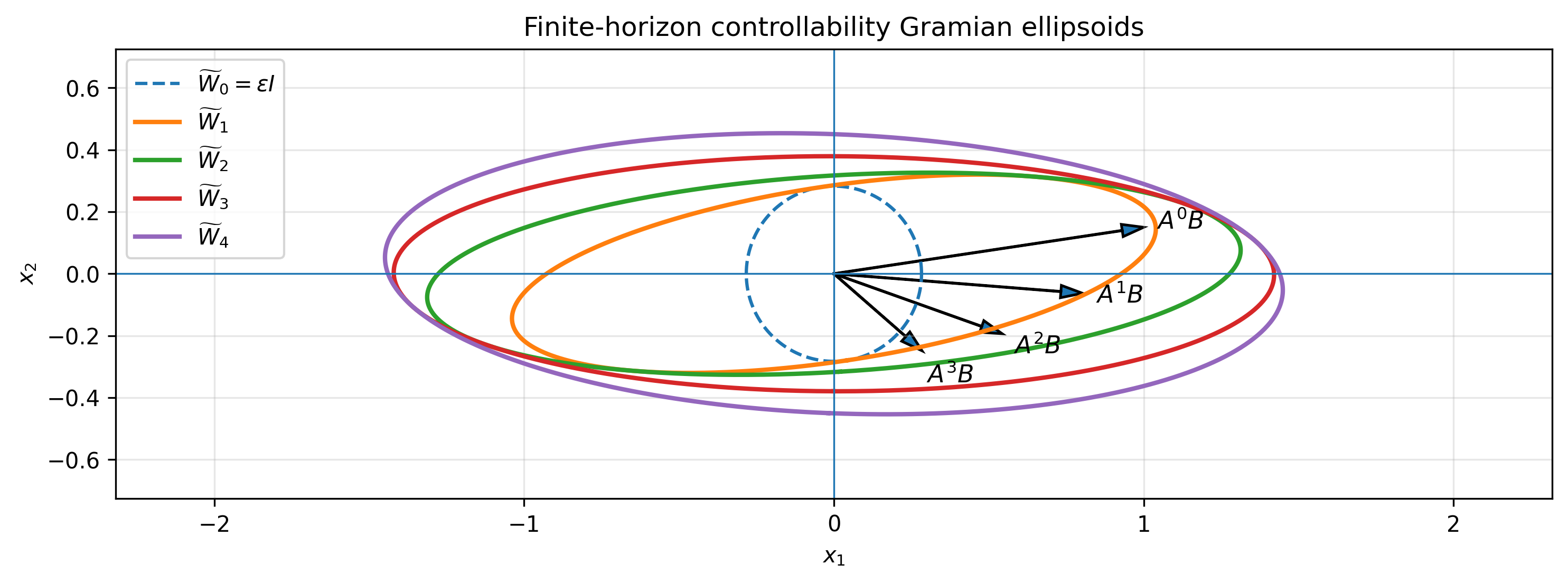}
		\caption{Evolution of the controllability Gramian ellipsoid for the discrete-time system
			$x_{t+1}=Ax_t+Bu_t$ with
			$A=\left[\begin{array}{cc}0.72 & 0.55\\ -0.18 & 0.78\end{array}\right]$,
			$B=\left[\begin{array}{c}1.0\\ 0.15\end{array}\right]$,
			and horizon $N=4$.
			Starting from the regularized matrix $W_0=\varepsilon I$, the ellipsoid
			$\{x : x^T W_k^{-1} x \le 1\}$ evolves under successive rank-one
			contributions $u_\ell u_\ell^T$, where, for $m=1$, the update directions
			are given by $u_\ell = A^{\ell-1} B$, $\ell=1,\dots,N$.
			Each update expands the ellipsoid in directions not yet represented in
			the Gramian, while contributions aligned with existing directions induce
			only marginal growth. This provides a geometric interpretation of the
			multiplicative structure of $\pdet(W_k)$ and the incremental expansion
			of the reachable set.}
		\label{fig:gramian_ellipsoid}
	\end{figure*}
	\subsection{Directional novelty and robustness under perturbations}
	To further extend this geometric picture, it is instructive to consider
	the effect of perturbations or modeling uncertainty on the evolution
	of the Gramian. Suppose that the update directions are not exactly
	given by $u_\ell = A^i B$, but by
	\(
	u_\ell = A^i B + w_\ell,
	\)
	where $w_\ell \in \mathbb{R}^n$ represents a small disturbance,
	modeling error, or unmodeled input component. The corresponding
	Gramian then takes the form
	\(
	W_N = \sum_{\ell=1}^{Nm} (u_\ell + w_\ell)(u_\ell + w_\ell)^T,
	\)
	which can be decomposed into nominal, perturbation, and interaction
	terms. While the nominal component $\sum u_\ell u_\ell^T$ reflects
	the structure imposed by the system dynamics, the additional terms
	introduce new directions that are not necessarily aligned with the
	deterministic evolution.
	From a geometric viewpoint, these perturbations act as a mechanism
	that enriches the span of reachable directions. In particular, even
	small components $w_\ell$ that are transverse to the current subspace
	can produce non-negligible contributions in the quadratic form
	$u_\ell^T \widetilde{W}_{\ell-1}^{-1} u_\ell$, thereby increasing
	the corresponding multiplicative factors in the determinant evolution.
	As a consequence, the associated ellipsoid expands not only along the
	dominant directions dictated by $A^i B$, but also in directions that
	would remain unexcited in the nominal setting. This leads to a more
	isotropic growth of the reachable set and, in turn, to a faster increase
	of $\pdet(W_N)$.
	Importantly, this behavior is fully consistent with the determinant
	dynamics developed in this paper. The multiplicative structure of the
	regularized determinant shows that each update contributes through a
	factor of the form
	\(
	1 + u_\ell^T \widetilde{W}_{\ell-1}^{-1} u_\ell,
	\)
	which quantifies the novelty of the direction $u_\ell$ relative to the
	current geometry. Perturbations effectively increase this novelty by
	injecting components outside the existing span, and therefore lead to
	a systematic amplification of the cumulative determinant growth. In
	this sense, the proposed framework provides a quantitative mechanism
	for distinguishing between reinforcement of existing directions and
	activation of genuinely new ones.
	This interpretation reveals a direct link between determinant-based
	quantities and robustness properties of dynamical systems. In
	particular, the pseudodeterminant emerges as an intrinsic measure of
	the effective system volume, capturing not only the nominal reachable
	subspace but also its expansion under uncertainty. Such effects are
	closely related to phenomena encountered in stochastic reachability,
	covariance inflation, and information propagation in uncertain systems.
	Taken together, these observations indicate that the proposed
	determinant-based framework extends naturally beyond deterministic
	settings and provides a unified tool for analyzing how structure,
	excitation, and uncertainty interact in shaping the geometry of
	reachable sets. This perspective opens several directions for future
	research, including extensions to stochastic systems, robustness
	analysis, and the design of inputs that explicitly maximize the growth
	of intrinsic system volume.

	\section{Conclusions}\label{sec:conclusion}
	
	This paper has developed a unified framework for determinant identities
	under finite-rank perturbations of square matrices that remains valid
	without invertibility assumptions. By exploiting an adjugate-based
	representation, we obtained explicit, non-asymptotic formulas that extend
	classical determinant relations to singular matrices.
	A central contribution is the identification of recursive and
	multiplicative structures governing the evolution of determinant and
	log-determinant quantities under successive rank-one updates. These
	results reveal that determinant-based quantities admit a decomposition
	into incremental contributions associated with individual directions,
	providing a precise structural interpretation of determinant evolution.
	The proposed framework enables a consistent treatment of singular systems
	via the Drazin inverse and the pseudodeterminant, leading to closed-form
	identities that isolate the contribution of the nonzero spectrum. This
	yields a natural extension of the matrix determinant lemma to the
	rank-deficient setting.
	Beyond the algebraic results, the paper establishes a direct connection
	between matrix perturbation theory and system-theoretic concepts. In
	particular, the pseudodeterminant of controllability Gramians is shown to
	admit a multiplicative decomposition that quantifies the incremental
	expansion of the reachable subspace under successive inputs. This
	provides a unified interpretation of information accumulation,
	uncertainty reduction, and reachability in linear systems.

\end{document}